 \definecolor{gris245}{RGB}{245,245,245}
 \definecolor{olive}{RGB}{50,140,50}
 \definecolor{brun}{RGB}{175,100,80}
\newtheorem{The}{Theorem}[section]
\newtheorem{Def}{Definition}[section]
\newtheorem{Ex}{Example}[section]
\newtheorem{remark}{Remark}[section]
\newtheorem{Conj}{Conjecture}
\begin{document}

% Use the \preprint command to place your local institutional report number
% on the title page in preprint mode.
% Multiple \preprint commands are allowed.
%\preprint{}

\title{Singular points in the solution trajectories of fractional order dynamical systems} %Title of paper

% repeat the \author .. \affiliation  etc. as needed
% \email, \thanks, \homepage, \altaffiliation all apply to the current author.
% Explanatory text should go in the []'s,
% actual e-mail address or url should go in the {}'s for \email and \homepage.
% Please use the appropriate macro for the type of information

% \affiliation command applies to all authors since the last \affiliation command.
% The \affiliation command should follow the other information.

\author{Sachin Bhalekar}
\email{sbb\_maths@unishivaji.ac.in, sachin.math@yahoo.co.in}
%\homepage[]{Your web page}
%\thanks{}
%\altaffiliation{}
\affiliation{Department of Mathematics, Shivaji University, Kolhapur - 416004, India}
\author{Madhuri Patil}
\email{madhuripatil4246@gmail.com}
%\homepage[]{Your web page}
%\thanks{}
%\altaffiliation{}
\affiliation{Department of Mathematics, Shivaji University, Kolhapur - 416004, India}
\date{\today}

\begin{abstract}
Dynamical systems involving non-local derivative operators are of great importance in Mathematical analysis and applications. This article deals with the dynamics of fractional order systems involving Caputo derivatives. We take a review of the solutions of linear dynamical systems ${}_0^C\mathrm{D}_t^\alpha X(t)=AX(t)$, where the coefficient matrix $A$ is in canonical form. We describe exact solutions for all the cases of canonical forms and sketch phase portraits of planar systems.
\par We discuss the behavior of the trajectories when the eigenvalues $\lambda$ of $A$ are at the boundary of stable region i.e. $|arg(\lambda)|=\frac{\alpha\pi}{2}$. Further, we discuss the existence of singular points in the trajectories of such systems in a region of $\mathbb{C}$ viz. Region II. It is conjectured that there exists singular point in the solution trajectories if and only if $\lambda\in$ Region II.
\end{abstract}

%\pacs{}% insert suggested PACS numbers in braces on next line

\maketitle %\maketitle must follow title, authors, abstract and \pacs

\begin{quotation}
%The ``lead paragraph'' is encapsulated with the \LaTeX\
%\verb+quotation+ environment and is formatted as a single paragraph before the first section heading.
%(The \verb+quotation+ environment reverts to its usual meaning after the first sectioning command.)
%Note that numbered references are allowed in the lead paragraph.
%
%The lead paragraph will only be found in an article being prepared for the journal \textit{Chaos}.
The systems involving nonlocal operators are proved useful in modeling natural phenomena. In contrast with classical operators, these are able to model memory in the system. However, the behavior of non-local models may differ from those containing integer-order derivatives with respect to some aspects. This article focuses one of these aspects which is important in chaos theory viz. self-intersecting trajectories.
\end{quotation}

\section{Introduction}
Nonlocal operators play a vital role in Mathematical analysis and applications. If the order of the derivative involved in the system is a non-integer then it is called as fractional derivative. In contrast with classical (integer-order) derivative,  fractional derivatives are non-local. There are different approaches of defining fractional order derivatives \cite{Das, Katugampola, Atangana, Hristov, Podlubny}. All these approaches have their own importance. 
\par Basic theory and applications of fractional calculus is presented in books \cite{Podlubny, Oldhalm, Samko}. Special functions arising in fractional calculus and their applications can be found in books \cite{Kiryakova, Mathai H, Mathai Sxena H}. Applications of fractional calculus in linear viscoelasticity \cite{Mainardi} and in bioengineering \cite{Magin} are discussed by Mainardi and Magin respectively. Analysis of fractional differential equations is carried out by various researchers e.g. see \cite{Diethelm Ford, Babakhani}.
\par The detailed review and history of fractional calculus is presented in the paper \cite{Machado} and two nice posters \cite{Old history,Recent history} designed by Machado, Kiryakova and Mainardi. Stability results for fractional differential equations are initially proposed by Matignon \cite{Matignon1, Matignon2, Matignon3, Matignon4} and later developed by Deng \cite{Deng}, Tavazoei and Haeri \cite{Tavazoei} and so on. Stability of nonlinear delay differential equations is discussed by Bhalekar in \cite{Bhalekar}. Chaos in fractional order nonlinear systems is discussed in \cite{v Gejji,Kaslik Shiv,Hartley,C. Li,Zhang}.
\par In this article we take a review of stability results in linear time invariant fractional order systems involving Caputo derivatives. We discuss the behavior of the system when the eigenvalues of coefficient matrix are on the boundary of stable region. Further, we discuss the singular points such as cusp and multiple-points in the solution trajectories. 

\section{Preliminaries}
This section deals with basic definitions and results given in the literature \cite{Das, Podlubny, Samko, Erdelyi, Diethelm}. Throughout this section, we take $n\in\mathbb{N}$.
\begin{Def}
The Riemann-Liouville (\text RL) fractional integral is defined as,
\begin{multline}
{}_0\mathrm{I}_t^\alpha f(t)=
\frac{1}{\Gamma{(\alpha)}}\int_0^t (t-\tau)^{\alpha-1}f(\tau)\,\mathrm{d}\tau,
  \\ \mathrm{if}\,\, n-1<\alpha< n.
\end{multline}
\end{Def}
\begin{Def}
The Riemann-Liouville (\text RL) fractional derivative is defined as, 
\begin{equation}
{}_0^{RL}\mathrm{D}_t^\alpha f(t)=
\begin{cases}
\frac{1}{\Gamma{(n-\alpha)}}\frac{d^n}{dt^n}\int_0^t (t-\tau)^{n-\alpha-1}f(\tau)\,\mathrm{d}\tau, &\\ \qquad \qquad \qquad \mathrm{if}\,\, n-1<\alpha< n\\
\frac{d^n}{dt^n}f(t), \qquad \qquad \mathrm{if}\, \alpha=n.
\end{cases}
\end{equation}
\end{Def}
\begin{Def}
The Caputo fractional derivative is defined as,
\begin{equation}
{}_0^{C}\mathrm{D}_t^\alpha f(t)=
\begin{cases}
\frac{1}{\Gamma{(n-\alpha)}}\int_0^t (t-\tau)^{n-\alpha-1}f^{(n)}(\tau)\,\mathrm{d}\tau, & \\ \qquad \qquad \qquad \mathrm{if}\,\, n-1<\alpha< n\\
\frac{d^n}{dt^n}f(t), \qquad \qquad \mathrm{if}\,\, \alpha=n.
\end{cases}
\end{equation}
Note that ${}_0^{C}\mathrm{D}_t^\alpha c=0$, where $c$ is constant.
\end{Def}
\begin{Def}
The one-parameter Mittag-Leffler function is defined as,
\begin{equation}
E_\alpha(z)=\sum_{k=0}^\infty \frac{z^k}{\Gamma(\alpha k+1)}\, ,\qquad (\alpha>0).
\end{equation}
The two-parameter Mittag-Leffler function is defined as,
\begin{equation}
E_{\alpha,\beta}(z)=\sum_{k=0}^\infty \frac{z^k}{\Gamma(\alpha k+\beta)}\, ,\qquad (\alpha>0,\,\beta>0).
\end{equation}
\end{Def}
\begin{Def}
A steady state solution of 
\begin{equation}
{}_0^C\mathrm{D}_t^\alpha x=f(x)
\end{equation} 
is called an equilibrium point. Thus $x_*$ is an equilibrium point of (6) if $f(x_*)=0$.
\end{Def}

Note that, if the Caputo derivative in (6) is replaced by RL derivative then the only equilibrium points are $x_*=0$ (if exists).

\noindent{\bf Properties:}\cite{Podlubny} 
If $\mathcal{L}\left\{f(t);s\right\}=F(s)$ is the Laplace transform of $f$ then we have:
\begin{equation}
\mathcal{L}\left\{{}_0^{RL}\mathrm{D}_t^\alpha f(t);s\right\}=s^\alpha F(s)-\sum_{k=0}^{n-1} s^k [{}_0^{RL}\mathrm{D}_t^{\alpha-k-1} f(t)]_{t=0},  
\end{equation}
where\, $n-1\le\alpha< n.$
\begin{equation}
\mathcal{L}\left\{{}_0^C\mathrm{D}_t^\alpha f(t);s\right\}=s^\alpha F(s)-\sum_{k=0}^{n-1} s^{\alpha-k-1} f^{(k)}(0),
\end{equation}
where\, $n-1<\alpha\le n.$
\begin{equation}
\mathcal{L}\left\{t^{\alpha k+\beta-1} E_{\alpha,\beta}^{(k)}(\pm\lambda t^\alpha)\right\}=\frac{k! s^{\alpha-\beta}}{(s^\alpha\mp\lambda)^{k+1}},
\end{equation}
where\,\, $\alpha>0, \beta>0$, $k\in \mathbb{N}\cup \{0\}$, $E_{\alpha,\beta}^{(k)}(y)=\frac{d^k}{dy^k}E_{\alpha,\beta}(y)$ and  $Re[s]>|\lambda|^{1/\alpha}$.
\\
\begin{Def}
 A function $f:(0,\infty)\rightarrow\mathbb{R}$ is called completely monotonic if it possesses derivatives $f^{(n)}(x)$ of any order $n = 0, 1, 2, \dots$, and the derivatives are alternating in sign\\
 i.e. $$
 (-1)^nf^{(n)}(x)\ge 0,\qquad \forall x\in(0,\infty).
 $$
\end{Def}
\begin{The} 
\cite{Luchko} Solution of homogeneous fractional order differential equation
\begin{equation}
{}_0^C\mathrm{D}_t^\alpha y(x)+\lambda y(x)=0, \qquad 0<\alpha<1
\end{equation} 
is given by,
\begin{equation}
y(x)=y(0)E_\alpha(-\lambda x^\alpha).
\end{equation}
\end{The}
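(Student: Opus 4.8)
The plan is to solve equation (10) by the Laplace transform, using the operational rules (8) and (9) already recorded above; these are tailored exactly to this computation. Since $0<\alpha<1$ we have $n=1$, so the Caputo transform rule (8) carries only a single boundary term involving $y(0)$, which is precisely what lets the initial value enter the answer.

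First I would set $Y(s)=\mathcal{L}\{y(x);s\}$ and transform both sides of (10). By (8) with $n=1$,
\begin{equation}
\mathcal{L}\{{}_0^C\mathrm{D}_t^\alpha y(x);s\}=s^\alpha Y(s)-s^{\alpha-1}y(0),
\end{equation}
and since $\mathcal{L}\{\lambda y(x);s\}=\lambda Y(s)$, the equation collapses to the algebraic relation $s^\alpha Y(s)-s^{\alpha-1}y(0)+\lambda Y(s)=0$. Solving for $Y(s)$ gives
\begin{equation}
Y(s)=y(0)\,\frac{s^{\alpha-1}}{s^\alpha+\lambda}.
\end{equation}

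The decisive step is to identify this expression, which is rational in $s^\alpha$, as a known transform. Taking $k=0$, $\beta=1$, and the lower sign in (9), and using $E_{\alpha,1}=E_\alpha$ together with $E_{\alpha,\beta}^{(0)}=E_{\alpha,\beta}$, I obtain
\begin{equation}
\mathcal{L}\{E_\alpha(-\lambda x^\alpha);s\}=\frac{s^{\alpha-1}}{s^\alpha+\lambda}.
\end{equation}
Comparing with the previous display shows $Y(s)=y(0)\,\mathcal{L}\{E_\alpha(-\lambda x^\alpha);s\}$, whence inverting by uniqueness of the Laplace transform yields $y(x)=y(0)E_\alpha(-\lambda x^\alpha)$, which is (11).

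I do not expect a serious obstacle; the only points needing care are analytic rather than algebraic. Formula (9) holds for $Re[s]>|\lambda|^{1/\alpha}$, so the inversion must be read on that half-plane, after which I would remark that the series defining $E_\alpha(-\lambda x^\alpha)$ is entire in $x$ and hence furnishes a genuine solution on $[0,\infty)$. As an independent check one can substitute the series termwise into (10): the $k=0$ term $y(0)$ is constant with zero Caputo derivative, while for $k\ge 1$ one uses ${}_0^C\mathrm{D}_t^\alpha x^{\alpha k}=\frac{\Gamma(\alpha k+1)}{\Gamma(\alpha k-\alpha+1)}\,x^{\alpha(k-1)}$; the $\Gamma(\alpha k+1)$ factors cancel and the shift $k\mapsto k-1$ reindexes the differentiated series into exactly $-\lambda$ times the original one, so ${}_0^C\mathrm{D}_t^\alpha y=-\lambda y$, while $E_\alpha(0)=1/\Gamma(1)=1$ recovers the initial condition.
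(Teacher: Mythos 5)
Your proposal is correct. The paper itself gives no proof of this theorem --- it is quoted from Luchko and Gorenflo with a citation --- but your Laplace-transform derivation is the standard one and is evidently the intended route, since the paper records exactly the two operational rules you invoke (the Caputo transform formula and the Mittag-Leffler transform pair) immediately before stating the theorem. Your algebra is right: with $n=1$ the transformed equation gives $Y(s)=y(0)\,s^{\alpha-1}/(s^\alpha+\lambda)$, and the specialization $k=0$, $\beta=1$ of the Mittag-Leffler transform identifies this as $y(0)\,\mathcal{L}\{E_\alpha(-\lambda x^\alpha)\}$. The termwise verification you append, using ${}_0^C\mathrm{D}_t^\alpha x^{\alpha k}=\frac{\Gamma(\alpha k+1)}{\Gamma(\alpha(k-1)+1)}x^{\alpha(k-1)}$ and the vanishing of the Caputo derivative on the constant term, is also sound and is a worthwhile addition since it sidesteps the half-plane restriction on the transform entirely.
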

\begin{The}
\cite{Luchko} Solution of non-homogeneous fractional order differential equation
\begin{equation}
{}_0^C\mathrm{D}_t^\alpha y(x)+\lambda y(x)=g(x), \qquad 0<\alpha<1
\end{equation} 
is given by,
\begin{equation}
y(x)=\int_0^x t^{\alpha-1}E_{\alpha,\alpha}(-\lambda t^\alpha)g(x-t)\,\mathrm{d}t+y(0)E_\alpha(-\lambda x^\alpha).
\end{equation}
\end{The}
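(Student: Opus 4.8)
The plan is to establish the representation by the Laplace transform method, in direct parallel with the homogeneous case of Theorem 2.1. First I would apply the Laplace transform to both sides of equation (12). Since $0<\alpha<1$ we have $n=1$, so the Caputo transform formula (8) collapses to the single boundary term $\mathcal{L}\{{}_0^C\mathrm{D}_t^\alpha y(x);s\}=s^\alpha Y(s)-s^{\alpha-1}y(0)$, where $Y(s)=\mathcal{L}\{y(x);s\}$. Writing $G(s)=\mathcal{L}\{g(x);s\}$ and using linearity, equation (12) transforms into the purely algebraic relation $(s^\alpha+\lambda)Y(s)=G(s)+s^{\alpha-1}y(0)$.

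Next I would solve for $Y(s)$ and split it into two pieces,
\begin{equation}
Y(s)=\frac{G(s)}{s^\alpha+\lambda}+\frac{s^{\alpha-1}}{s^\alpha+\lambda}\,y(0),
\end{equation}
and invert each piece using the Mittag-Leffler transform formula (9). Choosing $k=0$, $\beta=1$ and the lower sign in (9) gives $\mathcal{L}\{E_\alpha(-\lambda t^\alpha);s\}=s^{\alpha-1}/(s^\alpha+\lambda)$, so the second term inverts to $y(0)E_\alpha(-\lambda x^\alpha)$, exactly reproducing the homogeneous solution (11). Choosing instead $k=0$, $\beta=\alpha$ and the lower sign gives $\mathcal{L}\{t^{\alpha-1}E_{\alpha,\alpha}(-\lambda t^\alpha);s\}=1/(s^\alpha+\lambda)$, which identifies the kernel needed for the forcing term.

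Finally, since the first summand in (14) is a product of two Laplace transforms, namely $G(s)$ and $1/(s^\alpha+\lambda)$, I would invoke the convolution theorem for the Laplace transform to obtain
\begin{equation}
\mathcal{L}^{-1}\left\{\frac{G(s)}{s^\alpha+\lambda}\right\}=\int_0^x t^{\alpha-1}E_{\alpha,\alpha}(-\lambda t^\alpha)\,g(x-t)\,\mathrm{d}t,
\end{equation}
and adding the two inverted pieces yields (13). The computation itself is routine; the one point that genuinely needs care is the legitimacy of the term-by-term inversion. Formula (9) holds only for $Re[s]>|\lambda|^{1/\alpha}$, so I would assume $g$ is of exponential order to guarantee that $G(s)$ exists in a common right half-plane, and then check that the resulting convolution integral converges near $t=0$, where the singular factor $t^{\alpha-1}$ is integrable precisely because $\alpha>0$. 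Granting these standard hypotheses on $g$, the convolution theorem applies and the representation (13) follows.
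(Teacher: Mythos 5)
Your derivation is correct. Note, however, that the paper offers no proof of this statement at all: Theorem 2.2 is quoted as a preliminary result with a citation to Luchko and Gorenflo, whose original derivation proceeds by an operational (Mikusi\'nski-type) calculus rather than by the Laplace transform. Your Laplace-transform argument is the standard alternative route and it meshes cleanly with the machinery the paper does set up: formula (8) with $n=1$ gives the single boundary term $s^\alpha Y(s)-s^{\alpha-1}y(0)$, and formula (9) with $k=0$ and $\beta=1$, respectively $\beta=\alpha$, supplies exactly the two inverse transforms $\frac{s^{\alpha-1}}{s^\alpha+\lambda}\mapsto E_\alpha(-\lambda x^\alpha)$ and $\frac{1}{s^\alpha+\lambda}\mapsto x^{\alpha-1}E_{\alpha,\alpha}(-\lambda x^\alpha)$ that you need, after which the convolution theorem closes the argument. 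Your attention to the integrability of $t^{\alpha-1}$ near $t=0$ and to the exponential order of $g$ is the right caveat for justifying the term-by-term inversion; the operational-calculus route buys slightly weaker hypotheses on $g$ (it does not need $g$ to be Laplace-transformable), but for the purposes of this paper the two approaches deliver the same formula.
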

\begin{The} \cite{Podlubny}
If $0<\alpha<2$, $\beta$ is an arbitrary complex number and $\mu$ is an arbitrary real number such that $\frac{\alpha\pi}{2}<\mu<min\{\pi,\pi\alpha\}$, then for an arbitrary integer $p\ge1$ the following expansion holds:
\begin{equation}
\begin{split}
E_{\alpha,\beta}(re^{i\theta}t^\alpha) & = \frac{1}{\alpha}(re^{i\theta}t^\alpha)^{\frac{1-\beta}{\alpha}}exp((re^{i\theta}t^\alpha)^{\frac{1}{\alpha}})\\
& -\sum_{k=1}^p \frac{(re^{i\theta}t^\alpha)^{-k}}{\Gamma(\beta-\alpha k)}+O(|re^{i\theta}t^\alpha|^{-1-p}),
\end{split}
\end{equation} 
$|e^{i\theta}t^\alpha|\rightarrow\infty$,\, $|arg(e^{i\theta}t^\alpha)|=|\theta|\le\mu$.
\end{The}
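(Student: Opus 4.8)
The plan is to obtain the expansion from a Hankel-type contour integral representation of $E_{\alpha,\beta}$, following the classical route behind the cited result. Abbreviate $z=re^{i\theta}t^\alpha$, so that the hypothesis reads $|\arg z|=|\theta|\le\mu$ with $\tfrac{\alpha\pi}{2}<\mu<\min\{\pi,\pi\alpha\}$. First I would start from the defining series and insert the Hankel representation of the reciprocal Gamma function,
$$\frac{1}{\Gamma(\alpha k+\beta)}=\frac{1}{2\pi i}\int_{Ha}e^{u}\,u^{-\alpha k-\beta}\,du,$$
where $Ha$ is a loop coming from $-\infty$ below the cut, encircling the origin, and returning to $-\infty$ above it. Interchanging summation and integration (legitimate once the loop is taken at radius $>|z|^{1/\alpha}$, where the geometric series $\sum_k (z\,u^{-\alpha})^k$ converges uniformly) and summing the series gives
$$E_{\alpha,\beta}(z)=\frac{1}{2\pi i}\int_{Ha}\frac{e^{u}\,u^{\alpha-\beta}}{u^{\alpha}-z}\,du.$$
The substitution $\zeta=u^{\alpha}$ then produces the prefactor $1/\alpha$ and the representation
$$E_{\alpha,\beta}(z)=\frac{1}{2\pi i\,\alpha}\int_{C}\frac{e^{\zeta^{1/\alpha}}\,\zeta^{(1-\beta)/\alpha}}{\zeta-z}\,d\zeta,$$
where $C$ is the image loop, which I would take as the two rays $\arg\zeta=\pm\psi$ joined by the arc $|\zeta|=\epsilon$, for an aperture $\psi$ with $\tfrac{\alpha\pi}{2}<\mu<\psi<\min\{\pi,\pi\alpha\}$; the freedom to choose such a $\psi$ is exactly the content of the hypothesis on $\mu$.

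The second step is to deform $C$ inward from large radius to the small loop $\gamma(\epsilon,\psi)$. Because $|\arg z|\le\mu<\psi$, the pole $\zeta=z$ lies strictly inside the sector swept out during this deformation, so crossing it contributes the residue $\tfrac{1}{\alpha}z^{(1-\beta)/\alpha}e^{z^{1/\alpha}}$, which is precisely the leading exponential term of the statement. On the remaining loop $\gamma(\epsilon,\psi)$ one has $|\zeta|$ bounded while $|z|\to\infty$, so I would insert the finite geometric expansion
$$\frac{1}{\zeta-z}=-\sum_{k=0}^{p-1}\frac{\zeta^{k}}{z^{k+1}}+\frac{\zeta^{p}}{z^{p}(\zeta-z)}.$$
For the third step I would integrate term by term. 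Reversing the substitution $\zeta=u^{\alpha}$ in each polynomial term turns $\tfrac{1}{2\pi i\,\alpha}\int_{\gamma}e^{\zeta^{1/\alpha}}\zeta^{(1-\beta)/\alpha+k}\,d\zeta$ back into a Hankel integral equal to $1/\Gamma(\beta-\alpha(k+1))$; relabelling the index reproduces exactly the algebraic sum $-\sum_{k=1}^{p}z^{-k}/\Gamma(\beta-\alpha k)$ in the statement.

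The last step, which I expect to be the main obstacle, is the uniform estimate of the remainder $R_p=\tfrac{1}{2\pi i\,\alpha\,z^{p}}\int_{\gamma}\zeta^{p}e^{\zeta^{1/\alpha}}\zeta^{(1-\beta)/\alpha}(\zeta-z)^{-1}\,d\zeta$. The key point is the decay of the integrand along the rays $\arg\zeta=\pm\psi$: there $\arg(\zeta^{1/\alpha})=\pm\psi/\alpha$, and since $\psi>\tfrac{\alpha\pi}{2}$ forces $\cos(\psi/\alpha)<0$, the factor $e^{\zeta^{1/\alpha}}$ decays exponentially and dominates the algebraic growth $|\zeta|^{p+(1-\mathrm{Re}\,\beta)/\alpha}$, so $\int_{\gamma}|\zeta^{p}e^{\zeta^{1/\alpha}}\zeta^{(1-\beta)/\alpha}|\,|d\zeta|$ is finite and bounded independently of $z$. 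Since $|\zeta-z|$ stays comparable to $|z|$ on $\gamma$ once the pole has been accounted for, one gets $|R_p|\le C\,|z|^{-p-1}$ uniformly for $|\arg z|\le\mu$, which is the claimed $O(|z|^{-1-p})$ error. The upper bound $\psi<\min\{\pi,\pi\alpha\}$ is what keeps the rays inside the cut plane and $\arg(\zeta^{1/\alpha})$ within the principal branch, so that these estimates remain valid throughout.
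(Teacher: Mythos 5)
The paper does not prove this theorem; it is quoted verbatim from Podlubny's book with a citation and no argument. Your reconstruction via the Hankel-type integral representation $E_{\alpha,\beta}(z)=\frac{1}{2\pi i\,\alpha}\int_{\gamma(\epsilon,\psi)}e^{\zeta^{1/\alpha}}\zeta^{(1-\beta)/\alpha}(\zeta-z)^{-1}\,d\zeta$, residue extraction at $\zeta=z$, finite geometric expansion of $(\zeta-z)^{-1}$, and the uniform remainder bound using $\cos(\psi/\alpha)<0$ is exactly the standard proof given in that reference, and the steps are correct.
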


\section{Dynamics of fractional order systems}\label{Sec3}
\subsection{Linear systems involving Caputo fractional derivatives}
Consider the system,\\
\begin{equation}
{}_0^C\mathrm{D}_t^\alpha X(t)=AX(t),\qquad X(0)=X_0, \qquad 0<\alpha<1
\end{equation}
where $X$ is column vector in $\mathbb{R}^n$, $A$ is $n\times n$ matrix and constant vector $X_0=[C_1,C_2,\dots C_n]^T$.
We analyze this system by using the same approach as in \cite{Hirsch}. Another approach can be found in \cite{Odibat}.
Without loss of generality, we can assume $A$ is in canonical form and we discuss solution of the system (15) in the following cases:\\ 
\\
{\bf Case I :-} 
Suppose that 
\begin{equation*}
A=\begin{bmatrix}
\lambda_1 & 0 & \dots & 0\\
0 & \lambda_2 & \dots & 0\\
\hdotsfor{4}\\
 0 & 0 & \dots & \lambda_n 
\end{bmatrix}
\end{equation*}
where $\lambda_1,\lambda_2,\dots,\lambda_n$ are real and distinct.
Then the solution of the system (15) can be given by using (11) as,
\begin{equation*}
X(t)=
\begin{bmatrix}
E_\alpha(\lambda_1t^\alpha) & 0 & \dots & 0\\
0 & E_\alpha(\lambda_2t^\alpha) & \dots & 0\\
\hdotsfor{4}\\
 0 & 0 & \dots & E_\alpha(\lambda_nt^\alpha)
 \end{bmatrix}
 \begin{bmatrix}
C_1\\
C_2\\
\vdots\\
C_n
 \end{bmatrix}.
\end{equation*}\\
{\bf Case II :-} Suppose that the eigenvalues of $A$ are real and repeated. If $\lambda$ is repeated eigenvalue of $A$ and the geometric multiplicity of $\lambda$ is same as its algebraic multiplicity, then the solution is obtained in a same way as in Case I.\\ 
The cases of geometric multiplicity $<$ algebraic multiplicity with $n=3$ are illustrated below:\\
(i) Suppose that, 
$
 A=\begin{bmatrix}
\lambda & 1 & 0\\
0 & \lambda & 0\\
 0 & 0 & \lambda 
\end{bmatrix}
$.\\
The second and third equations in (15)  are decoupled in this case.\\
Therefore, $x_2(t)=E_\alpha(\lambda t^\alpha)C_2$ and  
$x_3(t)=E_\alpha(\lambda t^\alpha)C_3.$\\
Now, the first equation in (15) becomes, 
$$
{}_0^C\mathrm{D}_t^\alpha x_1(t)=\lambda x_1+x_2.
$$
 \begin{equation*}
 \therefore x_1(t)=E_\alpha(\lambda t^\alpha)C_1+t^\alpha E_\alpha^{(1)}(\lambda t^\alpha)C_2.
 \end{equation*}
\begin{equation*}
\therefore X(t)=
\begin{bmatrix}
E_\alpha(\lambda t^\alpha) & t^\alpha E_\alpha^{(1)}(\lambda t^\alpha) & 0\\
0 & E_\alpha(\lambda t^\alpha) & 0\\
 0 & 0 & E_\alpha(\lambda t^\alpha)
 \end{bmatrix}
 \begin{bmatrix}
C_1\\
C_2\\
C_3
 \end{bmatrix}.
\end{equation*} 
(ii) Suppose
$
 A=\begin{bmatrix}
\lambda & 1 & 0\\
0 & \lambda & 1\\
 0 & 0 & \lambda 
\end{bmatrix}
$.\\
Using similar arguments,
\begin{equation*}
X(t)=
\begin{bmatrix}
E_\alpha(\lambda t^\alpha) & t^\alpha E_\alpha^{(1)}(\lambda t^\alpha) & t^{2\alpha} E_\alpha^{(2)}(\lambda t^\alpha)\\
0 & E_\alpha(\lambda t^\alpha) & t^\alpha E_\alpha^{(1)}(\lambda t^\alpha)\\
 0 & 0 & E_\alpha(\lambda t^\alpha)
 \end{bmatrix}
 \begin{bmatrix}
C_1\\
C_2\\
C_3
 \end{bmatrix}.
\end{equation*}
The higher order systems can be solved in a similar way.\\
{\bf Case III :-}
If $A=\begin{bmatrix}
a & b\\
-b & a
\end{bmatrix}$, where $b\ne0$ then $\lambda=a\pm ib=re^{i\theta}$ are complex conjugate eigenvalues of $A$. In this case the system (15) is equivalent to,
\begin{align}
{}_0^C\mathrm{D}_t^\alpha x & = ax+by, \nonumber\\
{}_0^C\mathrm{D}_t^\alpha y & = -bx+ay.
\end{align}
Using method of successive approximations, we get the following solution 
\begin{equation}
X(t)=\begin{bmatrix}
 Re[E_\alpha(\lambda t^\alpha)] & Im[E_\alpha(\lambda t^\alpha)]\\
 -Im[E_\alpha(\lambda t^\alpha)] & Re[E_\alpha(\lambda t^\alpha)]
  \end{bmatrix}
  \begin{bmatrix}
  C_1\\
  C_2
  \end{bmatrix}.
\end{equation}
 In general, if 
\begin{equation*}
A=\begin{bmatrix}
D_1 & 0 & \dots & 0\\
0 & D_2 & \dots & 0\\
\hdotsfor{4}\\
 0 & 0 & \dots & D_n 
\end{bmatrix}
\end{equation*}
where,
\begin{equation*}
D_j=
\begin{bmatrix}
a_j & b_j\\
-b_j & a_j
\end{bmatrix}, \qquad j=1, 2,\dots,n
\end{equation*}
then the solution of the system (15) is,
 \begin{equation*}
 X(t)=
\begin{bmatrix}
 B_1 & 0 & \dots & 0 & 0\\
 0 & B_2 & \dots & 0 & 0\\
 \hdotsfor[2]{5}\\
 0 & 0 & \dots & B_{n-1} & 0\\
 0 & 0 & \dots & 0 & B_n
 \end{bmatrix}
 \begin{bmatrix}
 C_1\\
 C_2\\
 \vdots\\
 C_{2n-1}\\
 C_{2n}
 \end{bmatrix}
 \end{equation*}
 where $2\times2$ matrix  
 $$B_k=\begin{bmatrix}
 Re[E_\alpha(\lambda_kt^\alpha)] & Im[E_\alpha(\lambda_kt^\alpha)]\\
 -Im[E_\alpha(\lambda_kt^\alpha)] & Re[E_\alpha(\lambda_kt^\alpha)]
  \end{bmatrix}.$$
  \begin{remark}
  If $A$ is not in standard canonical form then there exists a matrix $P$ such that, $P^{-1}AP=B$, is in standard canonical form.\\
  Now we solve the system,
  $$
  {}_0^C\mathrm{D}_t^\alpha Y(t)=BY(t)=(P^{-1}AP)Y(t)
  $$
  Finally X(t)=PY(t) gives solution of the given system (15).
  \end{remark}
  \subsection{Stability analysis of the systems of linear fractional differential equations}
  \begin{The} \label{Thm3.1}
   \cite{Matignon1} Consider
  \begin{equation}
  {}_0^C\mathrm{D}_t^\alpha X(t)=AX(t),\quad X(0)=X_0, \quad 0<\alpha<1.
  \end{equation}
  The autonomous system (18) is:\\
  (a) Asymptotically stable if and only if $|arg(specA)|>\frac{\alpha \pi}{2}$.\\
  In this case, the components of the state decay towards 0 like $t^{-\alpha}$.\\
  (b)Stable if and only if either it is asymptotically stable, or those critical eigenvalues which satisfy $|arg(specA)|=\frac{\alpha \pi}{2}$ have geometric multiplicity one.
  \end{The}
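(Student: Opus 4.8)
The plan is to reduce the matrix problem to the scalar asymptotics of the Mittag-Leffler functions and then read the three regimes off the expansion in equation (14). First I would invoke the Remark: writing $P^{-1}AP=B$ in canonical form and setting $X=PY$, the quantities $\|X(t)\|$ and $\|Y(t)\|$ are equivalent because $P,P^{-1}$ are fixed invertible matrices, so (asymptotic) stability of (18) is unchanged under this coordinate change. Hence it suffices to treat each canonical block. But the explicit block solutions were already written down in Cases I--III, and every entry of the solution matrix has the form $t^{k\alpha}E_\alpha^{(k)}(\lambda t^\alpha)$ (or its real/imaginary part when $\lambda=a\pm ib$), where $\lambda$ is the block eigenvalue and $0\le k$ is bounded by the Jordan block size. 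Thus the whole theorem reduces to the growth or decay of $t^{k\alpha}E_\alpha^{(k)}(\lambda t^\alpha)$ as $t\to\infty$.

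Second, I would apply the expansion (14) with $\beta=1$ and $\lambda=re^{i\theta}$, $\theta=\arg\lambda$. Its admissible range $\tfrac{\alpha\pi}{2}<\mu<\min\{\pi,\pi\alpha\}$ places the boundary line $|\theta|=\tfrac{\alpha\pi}{2}$ strictly inside the sector where the expansion is valid, so one formula governs all cases. The leading term carries $\exp\bigl((\lambda t^\alpha)^{1/\alpha}\bigr)=\exp\bigl(r^{1/\alpha}t\,e^{i\theta/\alpha}\bigr)$, whose modulus is $\exp\bigl(r^{1/\alpha}t\cos(\theta/\alpha)\bigr)$. This yields a trichotomy: (i) $|\theta|<\tfrac{\alpha\pi}{2}$ gives $\cos(\theta/\alpha)>0$ and exponential growth; (ii) $|\theta|>\tfrac{\alpha\pi}{2}$ gives $\cos(\theta/\alpha)<0$, so the exponential decays and the algebraic tail $-\sum_k (\lambda t^\alpha)^{-k}/\Gamma(1-\alpha k)$ dominates, the $k=1$ term giving $E_\alpha(\lambda t^\alpha)\sim -\lambda^{-1}t^{-\alpha}/\Gamma(1-\alpha)$; (iii) $|\theta|=\tfrac{\alpha\pi}{2}$ gives $\cos(\theta/\alpha)=0$, so the leading term has modulus $1$ and $E_\alpha(\lambda t^\alpha)$ is bounded but does not tend to $0$.

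Third, I would assemble both parts. For (a): if every eigenvalue has $|\arg\lambda|>\tfrac{\alpha\pi}{2}$, case (ii) applies entrywise, so $X(t)\to 0$ at rate $t^{-\alpha}$, giving asymptotic stability with the stated decay; conversely a single eigenvalue with $|\arg\lambda|\le\tfrac{\alpha\pi}{2}$ produces a coordinate that either blows up (case (i)) or stays bounded away from $0$ (case (iii)), destroying asymptotic stability. For (b): on the boundary, a critical eigenvalue in a $1\times1$ block contributes only the bounded term $E_\alpha(\lambda t^\alpha)$, preserving boundedness, whereas a nontrivial block introduces the off-diagonal entries $t^{k\alpha}E_\alpha^{(k)}(\lambda t^\alpha)$, $k\ge1$; differentiating the leading exponential term $k$ times and combining the resulting $t^{k(1-\alpha)}$ factor with the explicit $t^{k\alpha}$ shows this entry grows like $t^{k}$ in modulus, breaking stability. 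This is precisely the dichotomy encoded in the geometric-multiplicity condition.

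The hard part will be the boundary case (iii), and specifically making the differentiated asymptotics rigorous: one must differentiate the expansion of $E_\alpha$ $k$ times and check that the formally dominant exponential term survives with a coefficient that, after multiplication by $t^{k\alpha}$, yields genuine polynomial growth $t^{k}$ rather than mere oscillation, while also confirming that the coefficients $1/\Gamma(1-\alpha k)$ vanishing at poles of $\Gamma$ do not accidentally cancel the leading contribution. A clean alternative for this step is the Laplace-transform route: from (8) one gets $\hat X(s)=(s^\alpha I-A)^{-1}s^{\alpha-1}X_0$, so the large-$t$ behavior is dictated by the singularities $s^\alpha=\lambda_j$; the condition $|\arg\lambda|=\tfrac{\alpha\pi}{2}$ places the relevant branch exactly on the imaginary $s$-axis, and a residue/Tauberian analysis separates a simple pole (bounded) from a higher-order one (growing), reproducing the Jordan-block dichotomy.
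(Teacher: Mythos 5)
The paper itself offers no proof of this theorem --- it is quoted from Matignon \cite{Matignon1} --- so your proposal must stand on its own. Your overall strategy (reduce to Jordan blocks, read the growth of $t^{k\alpha}E_\alpha^{(k)}(\lambda t^\alpha)$ off the Mittag-Leffler asymptotics, or equivalently off the order of the pole of $k!\,s^{\alpha-1}(s^\alpha-\lambda)^{-(k+1)}$ at $s=\lambda^{1/\alpha}$) is the standard route and essentially Matignon's own. But there is a genuine gap in your second step. The expansion (14) is valid only in the sector $|\arg(\lambda t^\alpha)|\le\mu$ with $\mu<\min\{\pi,\pi\alpha\}=\pi\alpha$, so it never covers eigenvalues with $|\arg\lambda|\ge\pi\alpha$ --- in particular it never covers negative real eigenvalues, and for small $\alpha$ it misses most of the stable sector. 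Your claims that ``one formula governs all cases'' and that $|\theta|>\frac{\alpha\pi}{2}$ forces $\cos(\theta/\alpha)<0$ both fail there: take $\alpha=\tfrac12$ and $\lambda=e^{0.8\pi i}$, a stable eigenvalue; then $\cos(\theta/\alpha)=\cos(1.6\pi)\approx 0.31>0$, and your leading term would predict exponential growth of a mode that in fact decays like $t^{-\alpha}$. You need the complementary expansion (Podlubny's companion theorem, not quoted in this paper), $E_{\alpha,\beta}(z)=-\sum_{k=1}^{p}z^{-k}/\Gamma(\beta-\alpha k)+O(|z|^{-1-p})$ for $\mu\le|\arg z|\le\pi$, in which the exponential term is absent; choosing $\mu$ suitably for each eigenvalue and combining the two expansions yields the trichotomy and the uniform $t^{-\alpha}$ decay across the whole stable sector.

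A second, smaller point: your Jordan-block dichotomy in part (b) actually proves that boundary stability holds if and only if every critical eigenvalue is semisimple (geometric multiplicity equal to algebraic multiplicity), not if and only if it has ``geometric multiplicity one'' as the statement literally says. A critical eigenvalue with algebraic and geometric multiplicity both equal to two gives two decoupled bounded scalar modes, hence stability, even though its geometric multiplicity is not one; conversely, geometric multiplicity one with algebraic multiplicity two gives a nontrivial block and the $t^{k}$ growth you identify. So your argument establishes the corrected form of Matignon's condition rather than the quoted one; this is a known imprecision in the literature, but you should flag it rather than silently prove a different statement. The remaining issues you raise yourself (justifying termwise differentiation of the asymptotic series, or replacing it by the residue analysis of $\hat X(s)$) are real but routine.
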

  \begin{The} 
  \cite{Qian} Consider 
  \begin{equation}
  {}_0^{RL}\mathrm{D}_t^\alpha X(t)=AX(t),\quad X(0)=X_0, \quad 0<\alpha<1.
  \end{equation}
  The autonomus system (19) is:\\
  (a) Asymptotically stable if all the eigenvalues of $A$ satisfy $|arg(\lambda(A))|>\frac{\alpha \pi}{2}$.\\
  In this case, the components of the state decay towards 0 like $t^{-1-\alpha}$.\\
  (b)Stable if and only if either it is asymptotically stable, or those critical eigenvalues which satisfy $|arg(specA)|=\frac{\alpha \pi}{2}$ have the same algebraic and geometric multiplicity.
  \end{The}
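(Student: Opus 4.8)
The plan is to parallel the Caputo analysis behind Theorem~\ref{Thm3.1}, while carefully tracking the different initial-data structure imposed by the Riemann--Liouville operator. First I would reduce $A$ to Jordan canonical form: pick $P$ with $P^{-1}AP=J$ and set $Y=P^{-1}X$, so that ${}_0^{RL}\mathrm{D}_t^\alpha Y=JY$ and the stability of (19) is unchanged. It then suffices to treat a single Jordan block with eigenvalue $\lambda=re^{i\theta}$.

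For such a block I would apply the Laplace transform via property (7). Since $0<\alpha<1$ gives $n=1$, the transform of ${}_0^{RL}\mathrm{D}_t^\alpha y$ equals $s^\alpha Y(s)-c$ with $c=[{}_0\mathrm{I}_t^{1-\alpha}y]_{t=0}$, so a scalar block gives $Y(s)=c/(s^\alpha-\lambda)$. Inverting with formula (9) for $k=0$, $\beta=\alpha$ produces the fundamental solution $y(t)=c\,t^{\alpha-1}E_{\alpha,\alpha}(\lambda t^\alpha)$, while an $m\times m$ block yields the additional entries $t^{k\alpha+\alpha-1}E_{\alpha,\alpha}^{(k)}(\lambda t^\alpha)$, $0\le k\le m-1$, again read off from (9). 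This is precisely where the Riemann--Liouville case departs from the Caputo one: the prefactor $t^{\alpha-1}$ --- absent in the Caputo solution $E_\alpha(\lambda t^\alpha)$ --- is what sharpens the decay from $t^{-\alpha}$ to the $t^{-1-\alpha}$ claimed in part (a).

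The heart of the argument is asymptotic and uses the expansion (14). Writing $z=\lambda t^\alpha$, the exponential $\exp(z^{1/\alpha})$ has real part $|z|^{1/\alpha}\cos(\theta/\alpha)$ and therefore decays exactly when $|\theta|>\tfrac{\alpha\pi}{2}$. In that sector only the algebraic tail of (14) survives; since its $k=1$ coefficient $1/\Gamma(\alpha-\alpha)=1/\Gamma(0)$ vanishes, the leading term is $O(z^{-2})$, so $E_{\alpha,\alpha}(\lambda t^\alpha)=O(t^{-2\alpha})$ and hence $y(t)=O(t^{\alpha-1}t^{-2\alpha})=O(t^{-1-\alpha})$. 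As this applies to every block, part (a) follows. Conversely, if some eigenvalue has $|\theta|<\tfrac{\alpha\pi}{2}$ then $\cos(\theta/\alpha)>0$, the exponential grows, and the system is unstable.

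It remains to establish the equivalence in part (b), which is governed by the boundary $|\theta|=\tfrac{\alpha\pi}{2}$. There $\cos(\theta/\alpha)=0$, so $\exp(z^{1/\alpha})$ is purely oscillatory; the dominant term $\tfrac1\alpha z^{(1-\alpha)/\alpha}\exp(z^{1/\alpha})$ of (14) has magnitude of order $t^{1-\alpha}$, whence a scalar (or diagonalizable) critical block gives $y(t)=O(t^{\alpha-1}t^{1-\alpha})=O(1)$ --- bounded, hence stable but not asymptotically stable. For a genuine Jordan block the entries $t^{k\alpha+\alpha-1}E_{\alpha,\alpha}^{(k)}(\lambda t^\alpha)$ with $k\ge1$ carry the extra powers $t^{k\alpha}$, and since the oscillatory exponential does not decay they grow without bound; boundedness therefore forces each critical eigenvalue to have equal algebraic and geometric multiplicity, and the converse is immediate. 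I expect this last step --- differentiating the oscillatory term in (14) $k$ times and verifying that the resulting growth is exactly polynomial rather than exponential --- to be the main technical obstacle of the proof.
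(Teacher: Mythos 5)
The paper offers no proof of this theorem: it is quoted directly from Qian et al.\ \cite{Qian}, so there is no in-paper argument to compare yours against. Your reconstruction follows the standard route of that reference --- Jordan reduction, Laplace transform with the Riemann--Liouville initial datum $[{}_0\mathrm{I}_t^{1-\alpha}y]_{t=0}$ (which, as you implicitly recognize, is the well-posed replacement for the loosely written condition $X(0)=X_0$), inversion via formula (9) to obtain $t^{\alpha-1}E_{\alpha,\alpha}(\lambda t^{\alpha})$ and its Jordan-block companions $t^{k\alpha+\alpha-1}E_{\alpha,\alpha}^{(k)}(\lambda t^{\alpha})$, and then Mittag-Leffler asymptotics. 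The two key observations are correct and well placed: the vanishing of the $k=1$ algebraic coefficient $1/\Gamma(\alpha-\alpha)=1/\Gamma(0)=0$ is precisely what sharpens the decay from $t^{-\alpha}$ (Caputo) to $t^{-1-\alpha}$ (RL), and the polynomial growth of the $k\ge1$ entries at critical eigenvalues is what forces equal algebraic and geometric multiplicity in part (b).

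One genuine gap you should patch before calling this a proof. The expansion (14) is stated only for $|\arg z|\le\mu$ with $\frac{\alpha\pi}{2}<\mu<\min\{\pi,\pi\alpha\}$, so for eigenvalues with $\pi\alpha\le|\arg\lambda|\le\pi$ --- in particular every negative real eigenvalue when $\alpha<1$ --- you cannot ``keep the exponential term and argue that it decays'': the formula simply does not apply there. You need the complementary asymptotic expansion of $E_{\alpha,\beta}$ valid for $\mu\le|\arg z|\le\pi$ (also in Podlubny \cite{Podlubny}), which contains no exponential term and yields the $O(|z|^{-2})$ bound directly; combining the two sectors covers all of $|\arg\lambda|>\frac{\alpha\pi}{2}$. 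Relatedly, differentiating an asymptotic expansion term by term (which you need for $E_{\alpha,\alpha}^{(k)}$ in the Jordan-block and critical cases) is not automatic and should either be justified or replaced by the known asymptotics for $E_{\alpha,\beta}^{(k)}$. With those repairs your outline is a faithful sketch of the argument in \cite{Qian}.
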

  \subsection{Phase portraits}
  Consider the autonomous system (15), with n=2.
  Clearly equilibrium point of the system (15) is origin.
    \subsubsection{ Eigenvalues of $A$ are in the unstable region:-}
    If $|arg(\lambda)|<\frac{\alpha \pi}{2}$ then the eigenvalue $\lambda$ of $A$ is unstable in the view of Theorem \ref{Thm3.1}. We consider following three cases of unstable eigenvalues. \\[0.20cm]
    Case(i): Suppose $\lambda_1$ and $\lambda_2$ are positive, real eigenvalues of $A$.\\
    In this case the solutions of the system (15) tend away from the origin as shown in the Figure (1). Equilibrium point in this case is called as a source \cite{Hirsch}.
    \begin{figure}[h]
          \subfloat[$\alpha=0.1$.]{\includegraphics[width=0.45\textwidth]{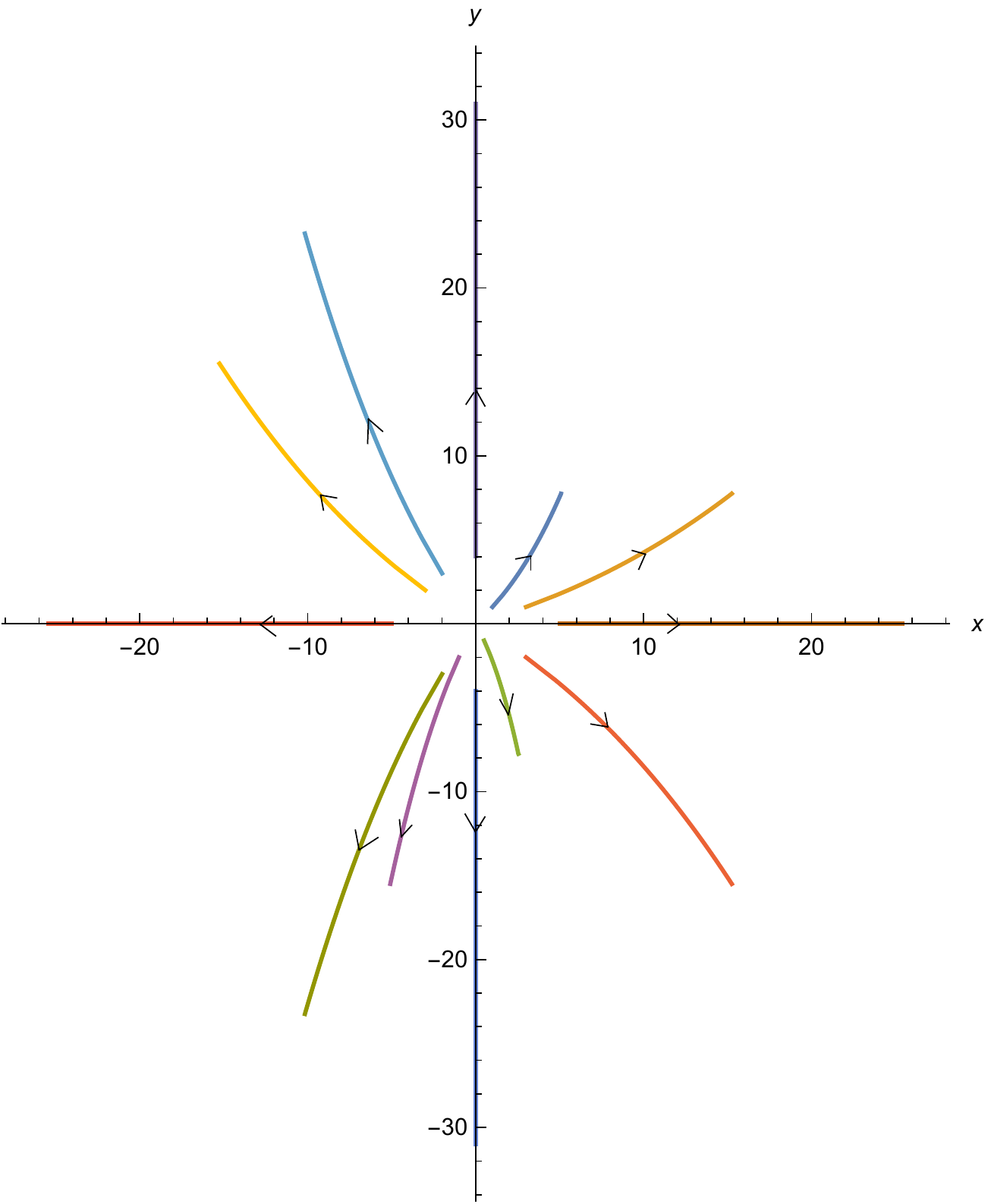}}
          \hfill 
          \subfloat[$\alpha=0.9$]{\includegraphics[width=0.45\textwidth]{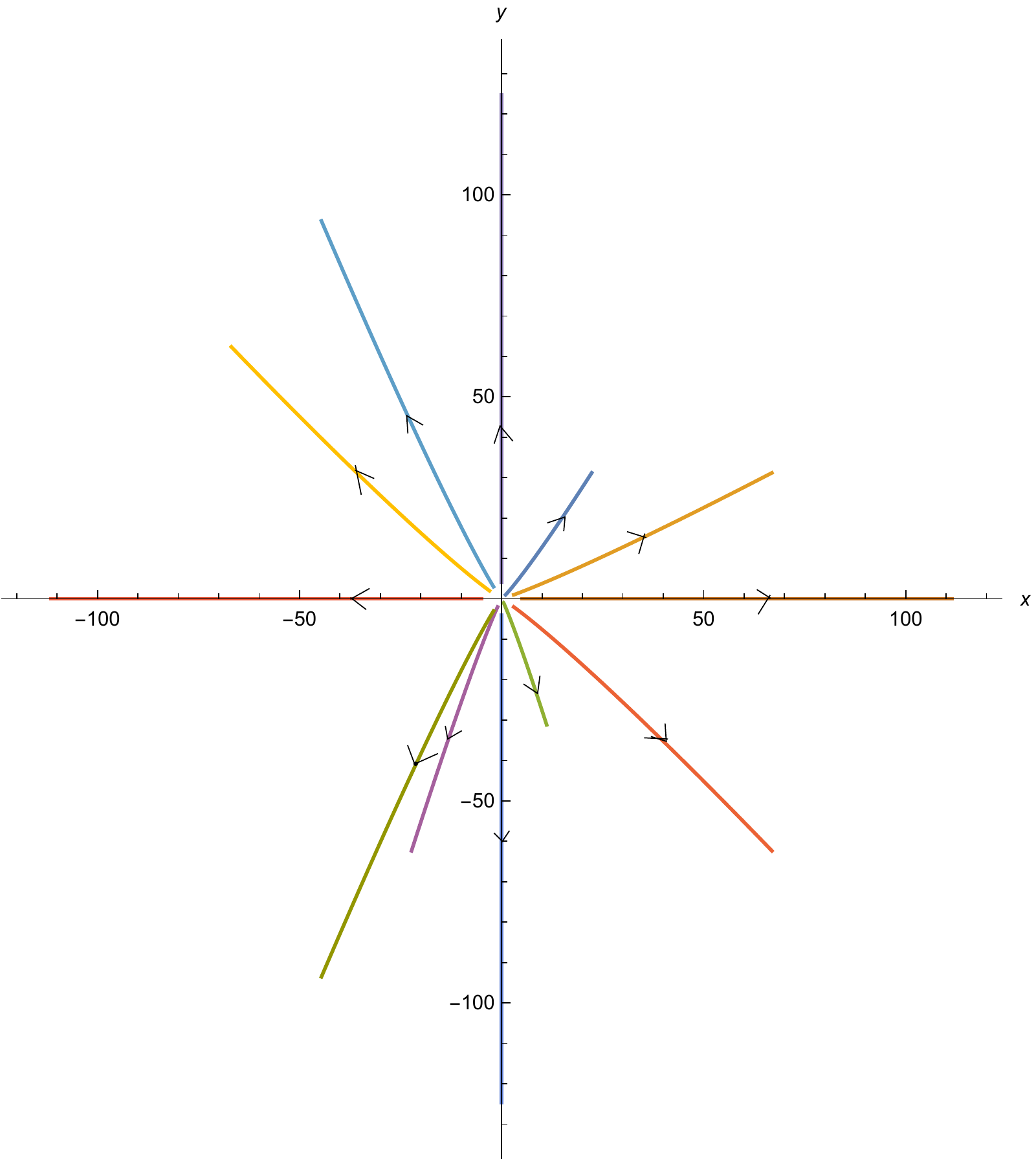}}
          \caption{Source\\ $\lambda_1=1$ and $\lambda_2=1.1$}
        \end{figure}
    \\[0.20cm]
    Case(ii): If 
    $
    A=
    \begin{bmatrix}
    \lambda_1 & 0\\
    0 & \lambda_2
    \end{bmatrix} 
    $
    where $\lambda_2<0<\lambda_1$, then solutions along the $X$-axis moves away from origin and the solutions along the $Y$-axis tends toward origin as $t\rightarrow\infty$. All other solutions tend away from origin in the direction parallel to the $X$-axis as $t\rightarrow\infty$. In backward time, these solutions tend away from origin in the direction parallel to the $Y$-axis as shown in the Figure (2). Equilibrium point in this case is called a saddle \cite{Hirsch}. 
    \begin{figure}[h]
        \subfloat[$\alpha=0.2$.]{\includegraphics[width=0.4\textwidth]{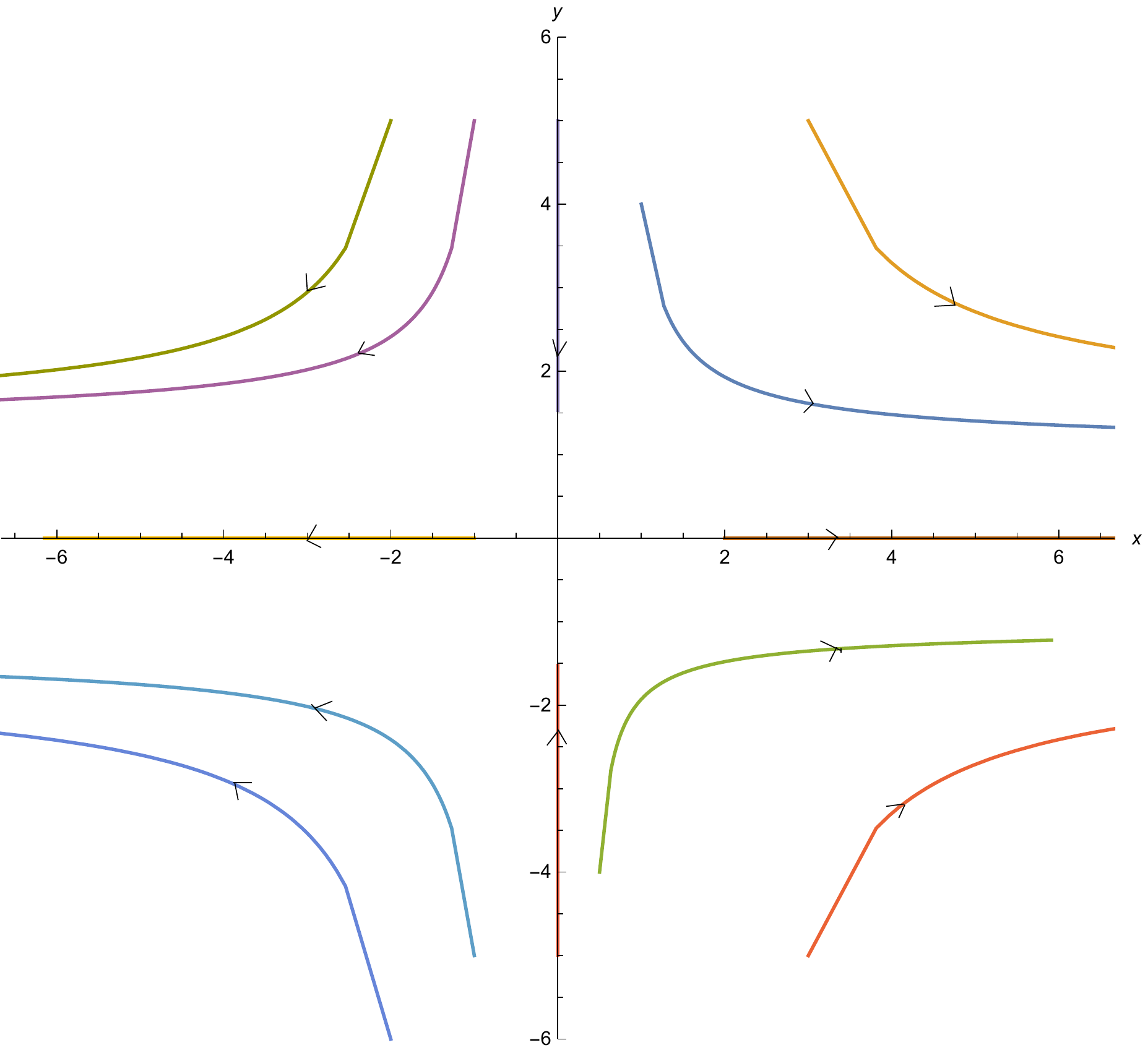}}
        \hfill 
        \subfloat[$\alpha=0.7$]{\includegraphics[width=0.4\textwidth]{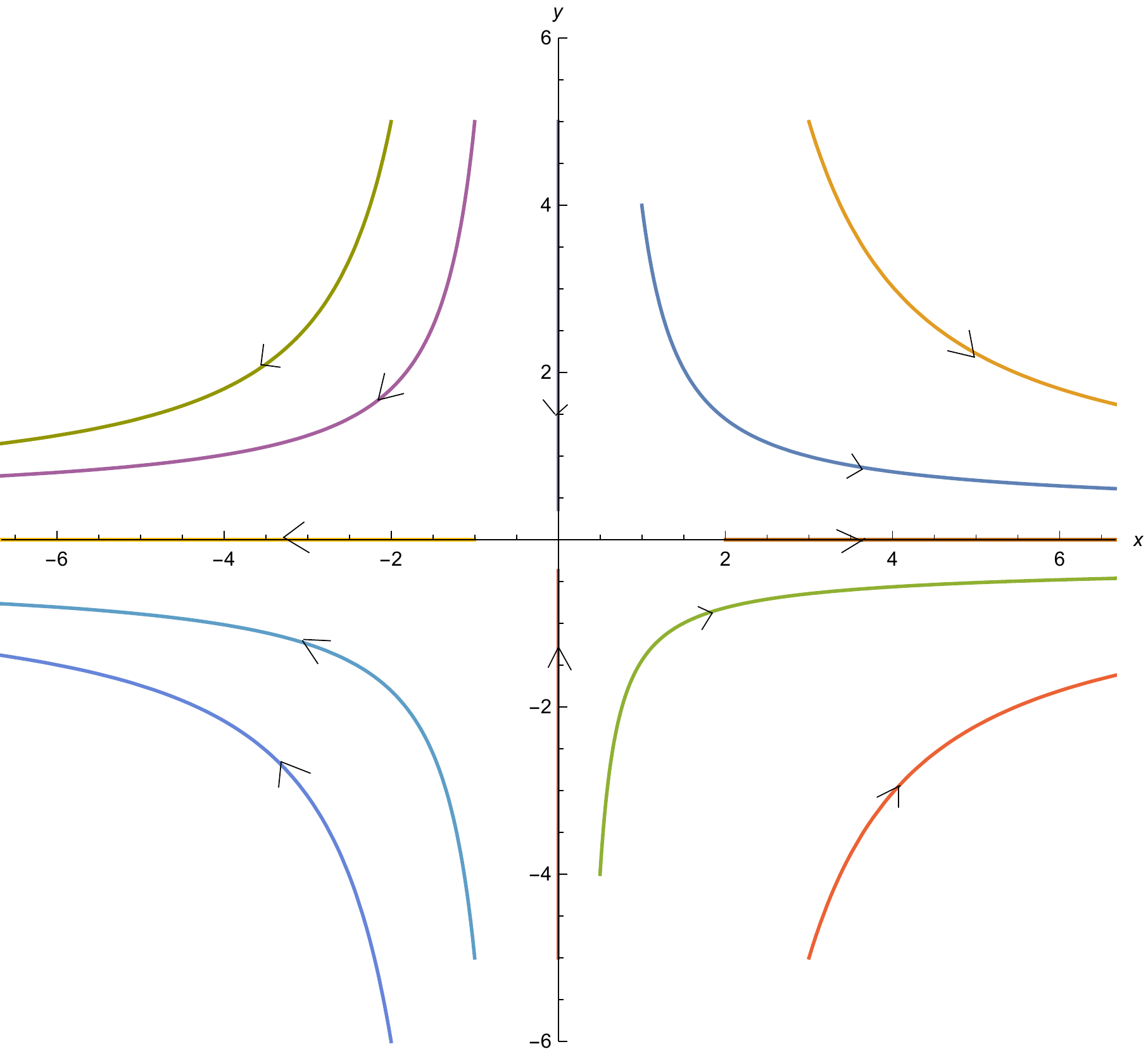}}
        \caption{Saddle\\ $\lambda_1=1$ and $\lambda_2=-2$ }
      \end{figure}
    \\[0.20cm]
    Case(iii): If eigenvalues of $A$ are complex numbers and they satisfy $|arg(\lambda)|<\frac{\alpha\pi}{2}$, then the solutions of system (15) starting in the neighborhood of origin spiral out from origin as shown in Figure (3). The equilibrium in this case is termed as a spiral source \cite{Hirsch}.
    \begin{figure}[h]
      \begin{center}
      \includegraphics[width=0.28\textwidth]{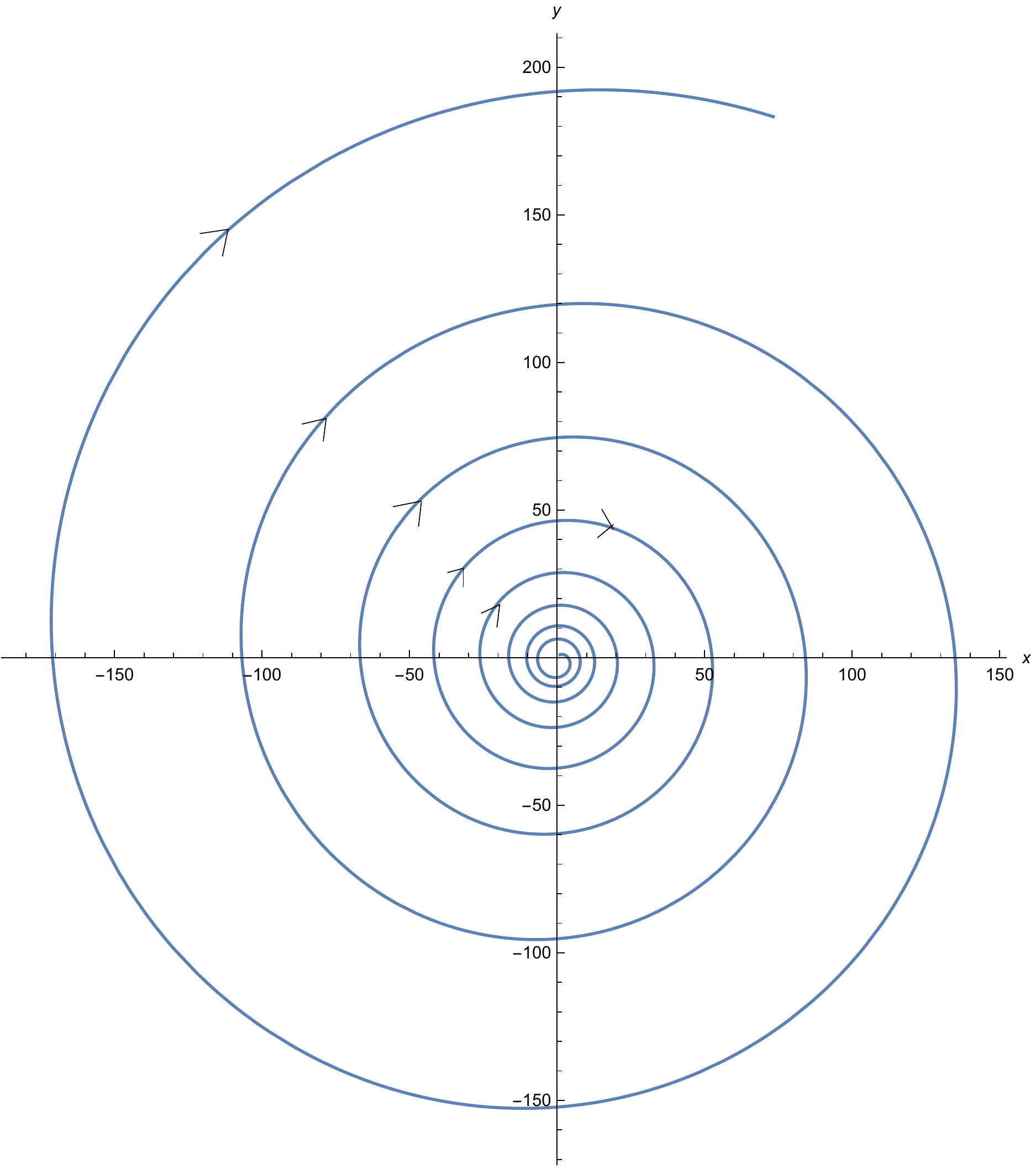}
      \caption{Spiral source \\ $\alpha=0.3$, $\lambda_{1,2}=e^{\pm i\frac{\pi}{7}}$}
      \end{center} 
    \end{figure}
    \\[0.25cm]
      \subsubsection{ Eigenvalues of $A$ are in the stable region:-}
       If $|arg(\lambda)|>\frac{\alpha \pi}{2}$, then  the eigenvalue $\lambda$ is called asymptotically stable. We consider the following two cases:\\[0.20cm]
        Case(i): If 
         $
          A=
          \begin{bmatrix}
          \lambda_1 & 0\\
          0 & \lambda_2
          \end{bmatrix} 
          $
          where $\lambda_1,\lambda_2$ are in $\mathbb{R}$ with $\lambda_1<\lambda_2<0$,
         then according to Theorem \ref{Thm3.1},  all the solutions tend to origin as $t\rightarrow\infty$.\\
        The eigenvalues $\lambda_1$ and $\lambda_2$ are termed as stronger and weaker eigenvalues respectively \cite{Hirsch}.\\
         All the solutions tend to origin, tangentially to the straight-line solution corresponding to the weaker eigenvalue (except those on the strait-line corresponding to the stronger eigenvalue) as shown in the Figure (4).
         \begin{figure}[h]
           \subfloat[$\alpha=0.4$.]{\includegraphics[width=0.35\textwidth]{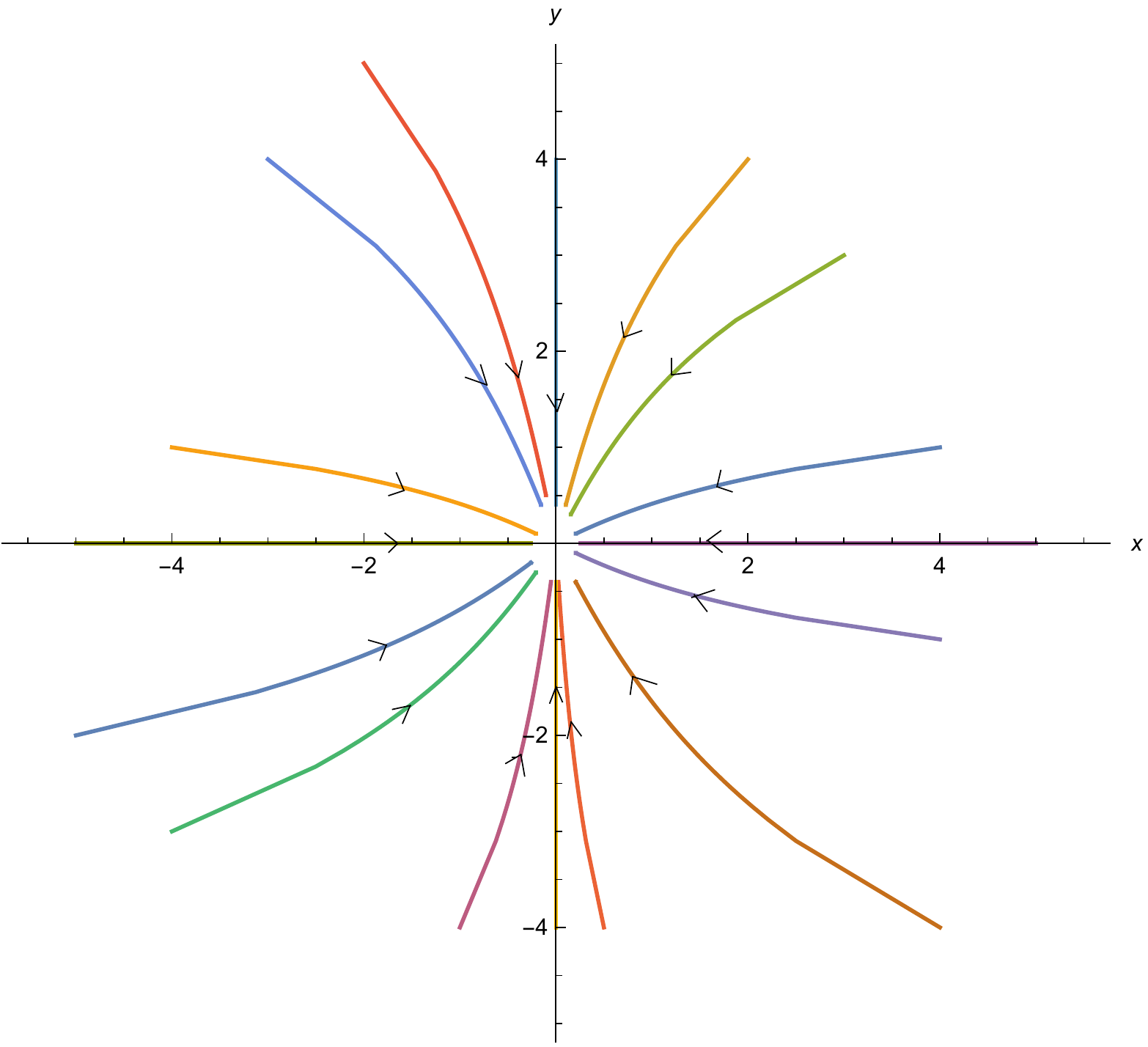}}
           \hfill 
           \subfloat[$\alpha=0.8$]{\includegraphics[width=0.35\textwidth]{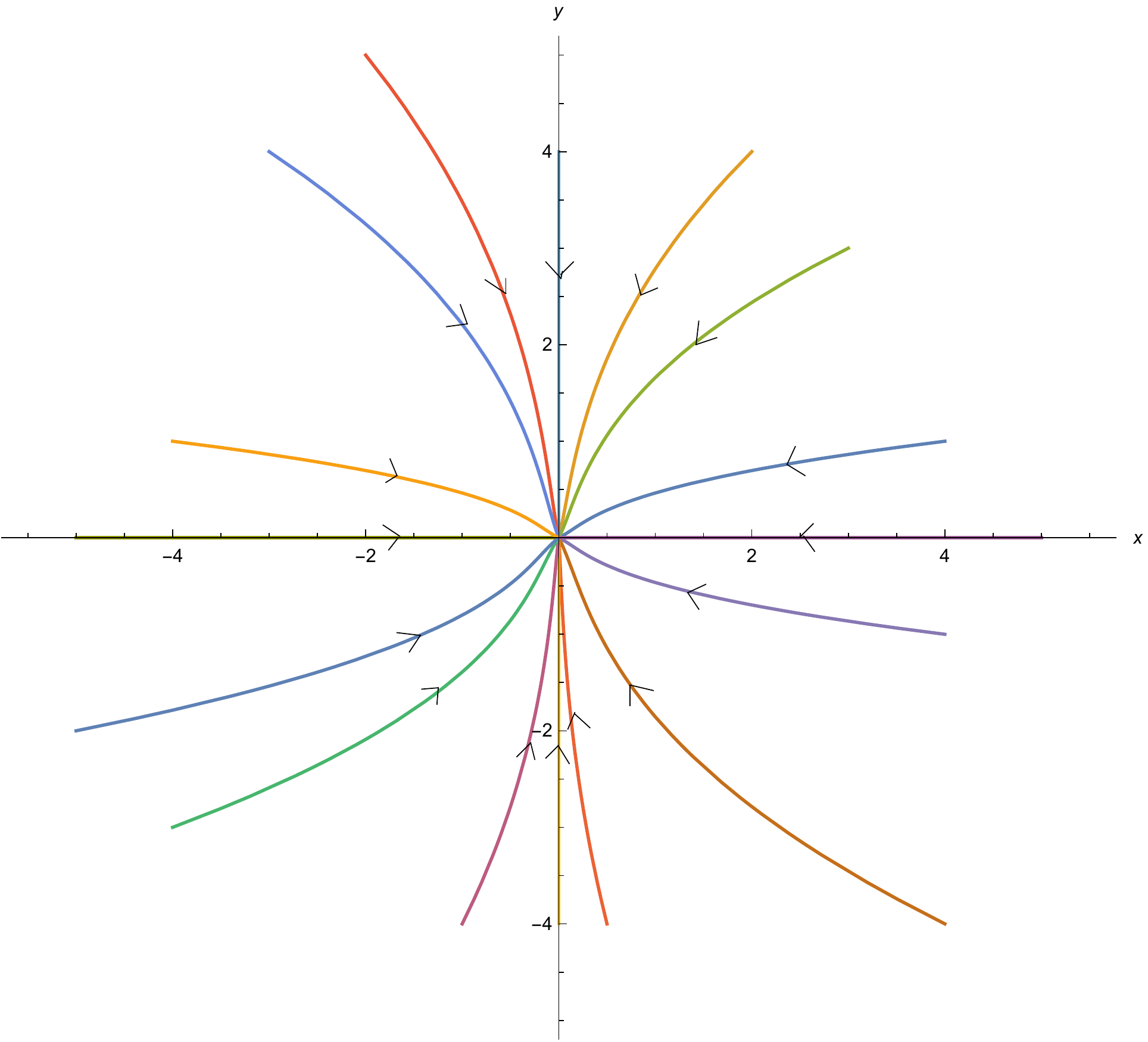}}
           \caption{$\lambda_1=-2$ and $\lambda_2=-1$.}
         \end{figure}
          \\[0.20cm]
         Case(ii): One may obtain a spiral sink as shown in the Figure (5) if $|arg(\lambda)|$ is sufficiently close to $\pi$.\\
         \begin{figure}[h]
        \begin{center} \includegraphics[width=0.4\textwidth]{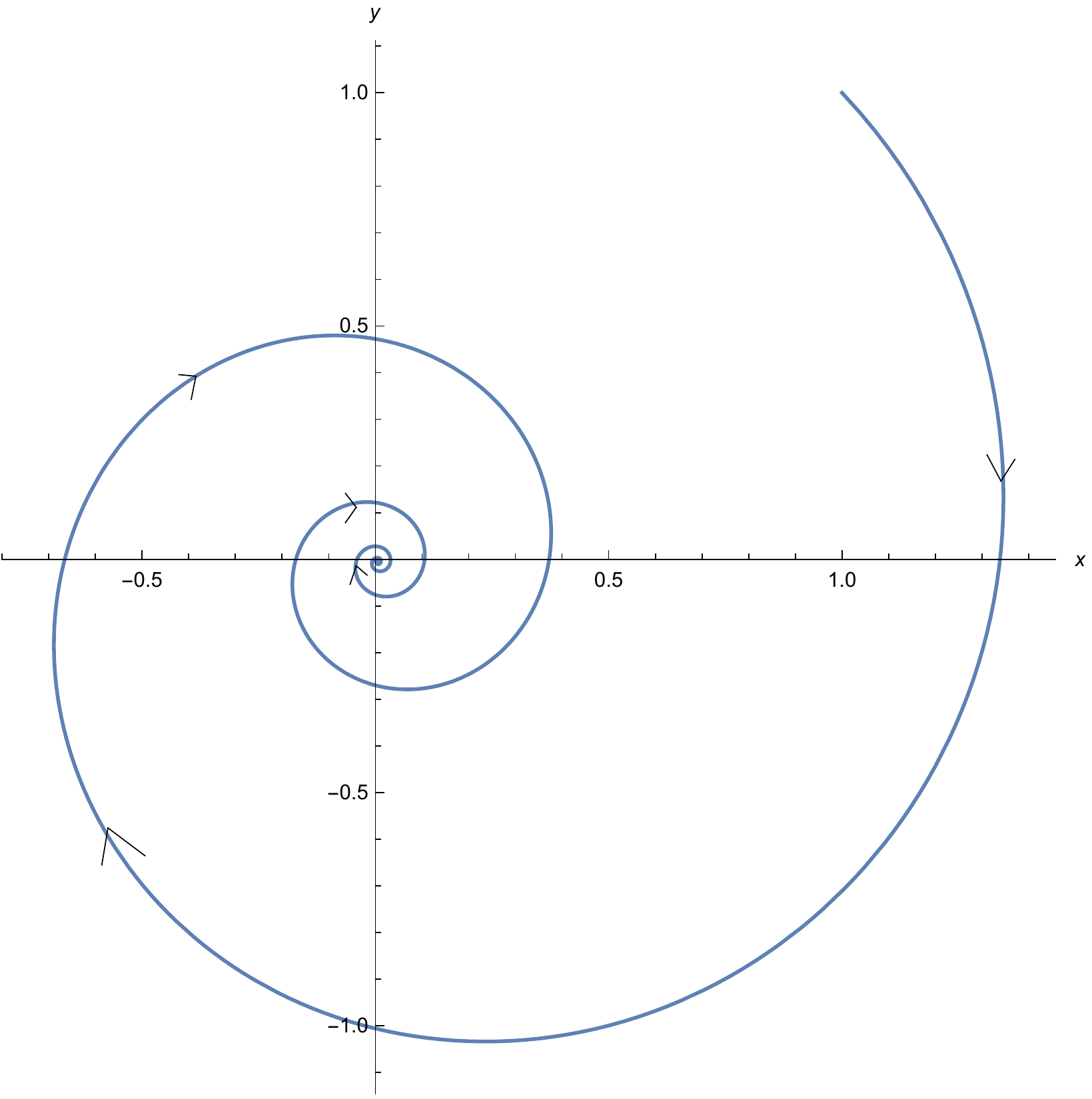}
        \caption{Spiral sink\\$\alpha=0.9$, $|arg(\lambda)|=1.6>\frac{0.9\pi}{2}$}
        \end{center}
         \end{figure}
         Further, if the eigenvalue is very close to boundary of stable region then it may generate a self intersecting trajectory. We discuss this phenomenon in Section \ref{Sec5} in details.
         \section{Eigenvalues at the boundary of stable region}
            If the eigenvalue $\lambda$ of a linear system of ordinary differential equation $$\dot{X}=AX$$ is at the boundary of the stable region. i.e. if $|arg(\lambda)|=\frac{\pi}{2}$ then it will generate a periodic solution. In phase plane, the solution trajectory is a closed orbit.\\
            However, as discussed in \cite{Kaslik}, fractional order system cannot have a periodic solution. In the following Theorem \ref{Thm4.1}, we show that the solutions of such system converge asymptotically to a closed orbit.\\
               \begin{The}\label{Thm4.1}
               The solution trajectory of the system 
               \begin{equation}
               {}_0^C\mathrm{D}_t^\alpha X(t)=AX(t),\quad X(0)=[C_1,C_2]^T, \quad 0<\alpha<1,
               \end{equation}
                where $A$ is $2\times2$ matrix with eigenvalues $\lambda_\pm=re^{\pm i\frac{\alpha \pi}{2}}$ and $C_1, C_2\in \mathbb{R}$,
               converges to a closed orbit, $x(t)^2+y(t)^2=\frac{C_1^2+C_2^2}{\alpha^2}$ in a phase plane. 
               \end{The}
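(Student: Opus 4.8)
The plan is to start from the closed-form solution of Case III and reduce the statement to the asymptotics of the modulus of a single Mittag-Leffler function. Since $\lambda_\pm=re^{\pm i\frac{\alpha\pi}{2}}$ form a complex-conjugate pair, by the Remark I may assume that $A$ is already in the real canonical form $\begin{bmatrix} a & b\\ -b & a\end{bmatrix}$ with $a+ib=re^{i\frac{\alpha\pi}{2}}$, so the solution is given by (17). Writing $u(t)=\mathrm{Re}[E_\alpha(\lambda t^\alpha)]$ and $v(t)=\mathrm{Im}[E_\alpha(\lambda t^\alpha)]$, the solution reads $x=uC_1+vC_2$ and $y=-vC_1+uC_2$. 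Since the coefficient matrix $M=\begin{bmatrix} u & v\\ -v & u\end{bmatrix}$ in (17) satisfies $M^{T}M=(u^2+v^2)I$ (it is $\sqrt{u^2+v^2}$ times a rotation), a direct expansion gives
$$x(t)^2+y(t)^2=(u^2+v^2)(C_1^2+C_2^2)=|E_\alpha(\lambda t^\alpha)|^2\,(C_1^2+C_2^2).$$
Thus the theorem is equivalent to the scalar limit $|E_\alpha(\lambda t^\alpha)|^2\to\frac{1}{\alpha^2}$ as $t\to\infty$.

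Next I would invoke the asymptotic expansion (14) with $\beta=1$ and $\theta=\frac{\alpha\pi}{2}$. First I would check its hypotheses: since $0<\alpha<1$ one has $\min\{\pi,\pi\alpha\}=\pi\alpha$, the interval $(\frac{\alpha\pi}{2},\pi\alpha)$ is nonempty, and one can choose $\mu$ in it with $|\theta|=\frac{\alpha\pi}{2}<\mu$, so (14) applies along the boundary ray. The decisive computation is the exponent in the leading term: with $\beta=1$ the prefactor $(re^{i\theta}t^\alpha)^{(1-\beta)/\alpha}$ equals $1$, and
$$(re^{i\theta}t^\alpha)^{1/\alpha}=r^{1/\alpha}e^{i\theta/\alpha}t=r^{1/\alpha}e^{i\pi/2}t=i\,r^{1/\alpha}t,$$
which is purely imaginary precisely because $\lambda$ lies on the boundary $|\arg\lambda|=\frac{\alpha\pi}{2}$.

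I would then read off the consequence. The leading term of (14) becomes $\frac{1}{\alpha}\exp(i\,r^{1/\alpha}t)=\frac{1}{\alpha}\bigl(\cos(r^{1/\alpha}t)+i\sin(r^{1/\alpha}t)\bigr)$, whose modulus is the constant $\frac{1}{\alpha}$, while every remaining term $\frac{(re^{i\theta}t^\alpha)^{-k}}{\Gamma(1-\alpha k)}$ together with the $O(|re^{i\theta}t^\alpha|^{-1-p})$ remainder decays like $t^{-\alpha}$ or faster. Hence $E_\alpha(\lambda t^\alpha)=\frac{1}{\alpha}e^{i\,r^{1/\alpha}t}+\rho(t)$ with $\rho(t)\to 0$, so $\bigl|\,|E_\alpha(\lambda t^\alpha)|-\frac{1}{\alpha}\,\bigr|\le|\rho(t)|\to 0$, giving $|E_\alpha(\lambda t^\alpha)|^2\to\frac{1}{\alpha^2}$ and therefore $x^2+y^2\to\frac{C_1^2+C_2^2}{\alpha^2}$; the trajectory spirals toward the stated circle rather than tracing it exactly, consistent with the nonexistence of exact periodic orbits.

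I expect the only genuinely delicate point to be the passage from ``leading term of constant modulus plus vanishing corrections'' to ``the modulus converges''. The leading term $\frac{1}{\alpha}e^{i\,r^{1/\alpha}t}$ is oscillatory and does not converge, so one cannot take a limit of $E_\alpha$ itself; the clean way around this is to keep the oscillatory factor explicit and estimate the modulus via the triangle inequality as above. Everything else --- the algebraic identity for $x^2+y^2$ and the decay of the correction terms --- is routine.
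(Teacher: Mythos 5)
Your proposal is correct and follows essentially the same route as the paper: reduce $x(t)^2+y(t)^2$ to $(C_1^2+C_2^2)\left|E_\alpha(re^{i\frac{\alpha\pi}{2}}t^\alpha)\right|^2$ via the rotation structure of the coefficient matrix in (17), then apply the asymptotic expansion (14) with $\beta=1$ on the boundary ray to get a leading term of constant modulus $\frac{1}{\alpha}$ plus decaying corrections. If anything, your handling of the final step is more careful than the paper's, which writes $\lim_{t\to\infty}E_\alpha(re^{i\frac{\alpha\pi}{2}}t^\alpha)$ even though that limit does not exist (the leading term oscillates); your triangle-inequality estimate on the modulus is the clean way to state what the paper intends.
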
  
               \begin{proof}
               The solution of the system (20) is,
               \begin{equation}
     \begin{split}
    X(t) & =
   \begin{bmatrix}
      x(t)\\
      y(t)
  \end{bmatrix}\\
    & =
  \begin{bmatrix}
  Re[E_\alpha(re^{i\frac{\alpha \pi}{2}}t^\alpha)] &  Im[E_\alpha(re^{i\frac{\alpha \pi}{2}}t^\alpha)]\\
  -Im[E_\alpha(re^{i\frac{\alpha \pi}{2}}t^\alpha)] & Re[E_\alpha(re^{i\frac{\alpha \pi}{2}}t^\alpha)]
      \end{bmatrix}
  \begin{bmatrix}
  C_1\\
  C_2
   \end{bmatrix}.
 \end{split}
 \end{equation}
               Thus,
               \begin{align*}
             x(t) & =C_1Re[E_\alpha(re^{i\frac{\alpha \pi}{2}}t^\alpha)]+C_2Im[E_\alpha(re^{i\frac{\alpha \pi}{2}}t^\alpha)] \\
               y(t) & =-C_1Im[E_\alpha(re^{i\frac{\alpha \pi}{2}}t^\alpha)]+C_2 Re[E_\alpha(re^{i\frac{\alpha \pi}{2}}t^\alpha)].
               \end{align*}
               Now,
               \begin{align}
               [x(t)]^2+[ y(t)]^2 & =(C_1^2+C_2^2)\left|E_\alpha(re^{i\frac{\alpha \pi}{2}}t^\alpha)\right|^2. \nonumber \\
               \Rightarrow \lim\limits_{t\to\infty}[[x(t)]^2+[ y(t)]^2] & =(C_1^2+C_2^2)\left|\lim\limits_{t\to\infty}E_\alpha(re^{i\frac{\alpha \pi}{2}}t^\alpha)\right|^2.
               \end{align}
               As $0<\alpha<1$, $|arg(re^{i\frac{\alpha \pi}{2}}t^\alpha)|=\frac{\alpha \pi}{2}$ and for $p=1$ and $\beta=1$ in (14), we have the following asymptotic expansion for Mittag-Leffler function:\\
               \begin{equation*}
               \begin{split}
               E_\alpha(re^{i\frac{\alpha\pi}{2}}t^\alpha) & =\frac{1}{\alpha} exp[r^\frac{1}{\alpha}e^{i\frac{\pi}{2}}t]-\frac{1}{\Gamma(1-\alpha)}\frac{1}{re^{i\frac{\alpha \pi}{2}}t^\alpha}\\
               & \qquad +O\left(\frac{1}{r^2e^{2i\frac{\alpha \pi}{2}}t^{2\alpha}}\right).
               \end{split}
               \end{equation*}
               Since $e^{i(r^\frac{1}{\alpha})t}$ is bounded and the terms $\frac{1}{t^\alpha}$, $\frac{1}{t^{2\alpha}}$, $\frac{1}{t^{3\alpha}}$, $\dots$ tend to zero as $t\rightarrow\infty$.\\
               We have,\\
               $$
               \left|\lim\limits_{t\to\infty}E_\alpha(re^{i\frac{\alpha \pi}{2}}t^\alpha)\right|^2=\frac{1}{\alpha^2}.
               $$
               Therefore,
               \begin{equation}
               \lim\limits_{t\to\infty}[[x(t)]^2+[y(t)]^2]  =\frac{C_1^2+C_2^2}{\alpha^2}.
               \end{equation}
               Hence, the solution trajectory tend to a circle centered at origin and of radius $\frac{\sqrt{C_1^2+C_2^2}}{\alpha}$.\\[0.25cm]
               \end{proof}
               \par In Figure (6), we sketch the trajectories of the case $|arg(\lambda)|=\frac{\alpha\pi}{2}$ for $\alpha=0.01$ and $\alpha=0.7$ respectively. The trajectories are converging to the circles.
               \begin{figure}[h]
               \subfloat[$\alpha=0.01$.]{\includegraphics[width=0.35\textwidth]{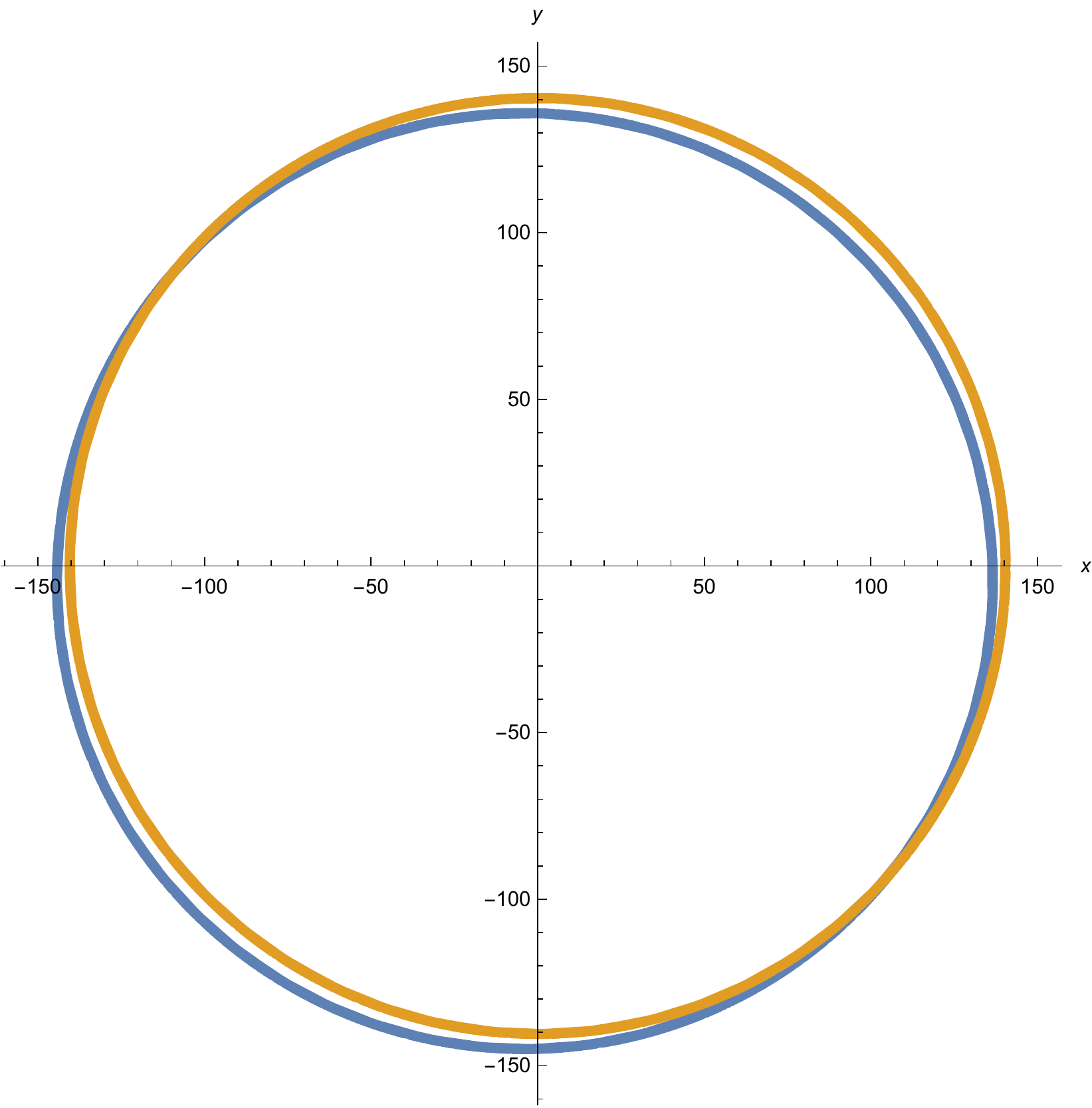}}
              \hfill 
            \subfloat[$\alpha=0.7$]{\includegraphics[width=0.35\textwidth]{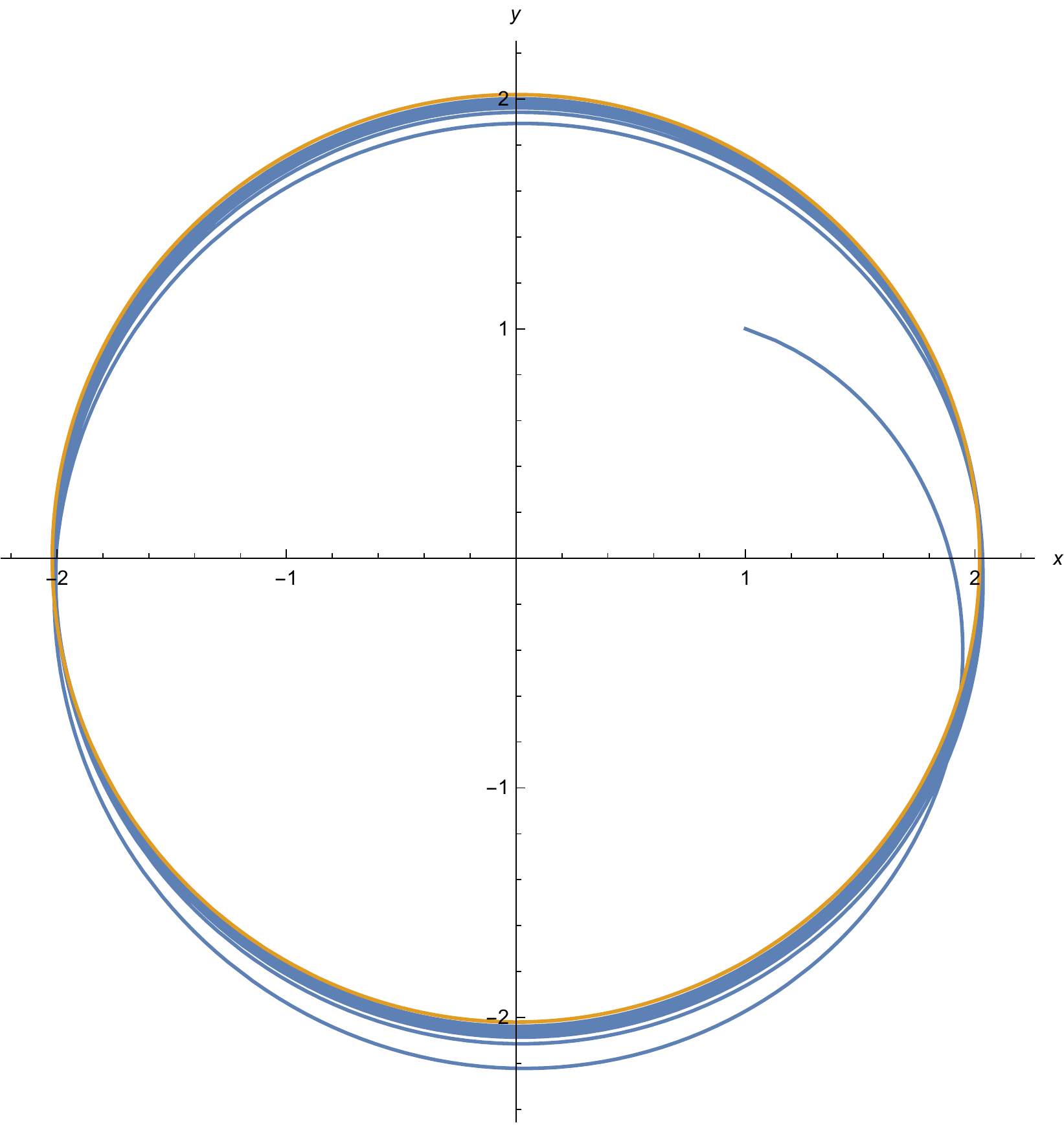}}
             \caption{$\lambda=e^{i\theta}$ where $\theta= arg(\lambda)=\frac{\alpha \pi}{2}$}
                     \end{figure}
         \section{Singular points in solution trajectories}\label{Sec5}
         If the trajectory $X(t)$ is not smooth in the neighborhood of point $X(t_0)$ then it is called a singular point.\\
         Examples of singular points are cusps and multiple points (e.g. Self intersections). \\
         ``A smooth curve has a unique tangent at each point\cite{Pogorelov}" and hence does not contain any singular point.
         \par If the system of ordinary (integer order) differential equations is non-autonomous, then the solution trajectories may intersect.\\
         e.g. In periodically forced pendulum \\
         $\ddot{x}+0.6\sin x=0.3\cos (2\pi t)$
         \, \cite{Mukherjee}, the self intersecting trajectory is given in the Figure (7).
         \begin{figure}[h]
           \begin{center}
           \includegraphics[width=0.5\textwidth]{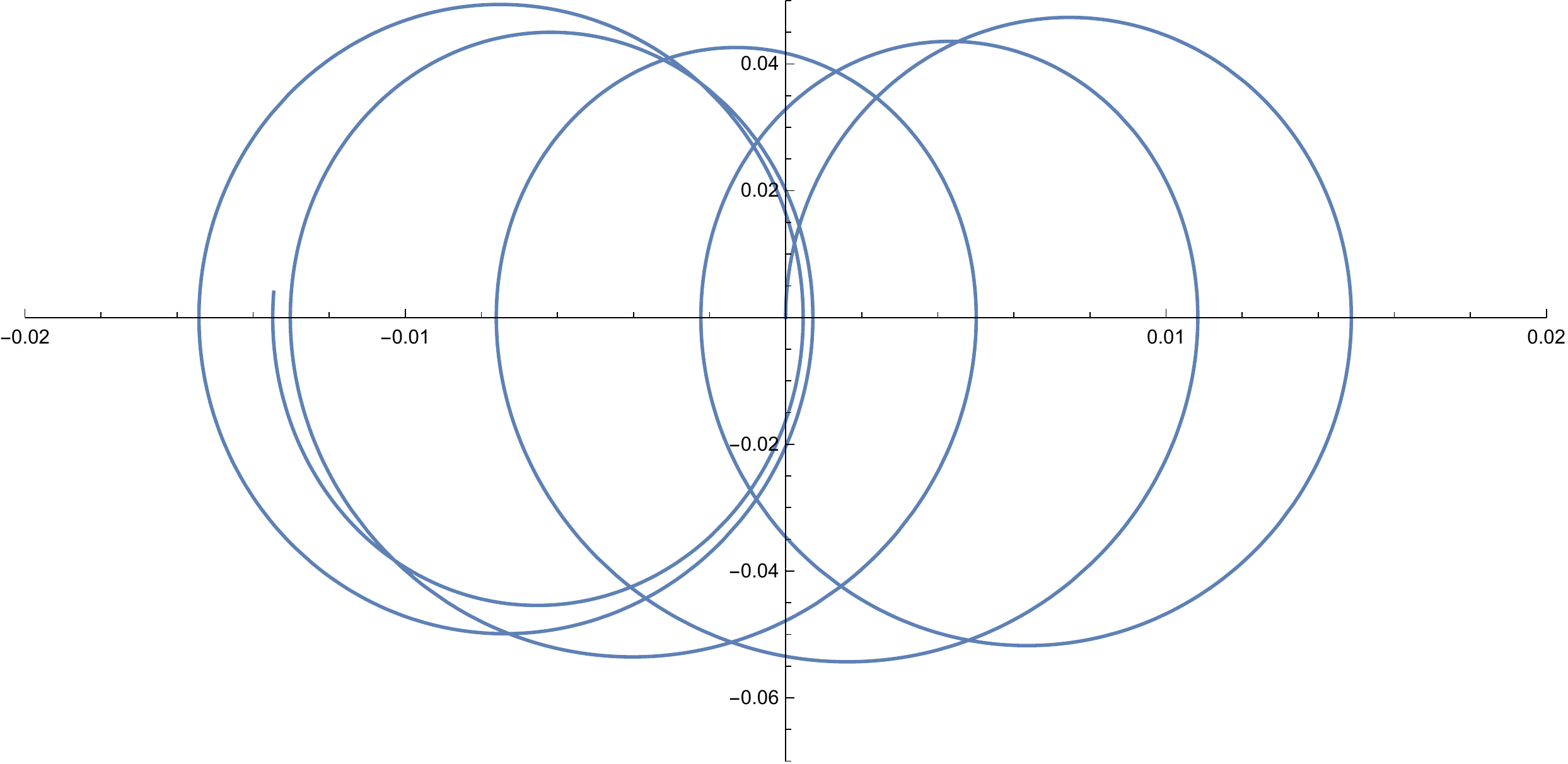}
            \caption{Self-intersecting trajectory of periodically forced pendulum}
           \end{center}               
           \end{figure}
         
         \par Further, it is proved in the literature \cite{Arnold} that the system of autonomous (integer order) differential equations cannot have a self-intersecting trajectory. 
         \par On the other hand, if we consider planar fractional order system then  there may exist singular points in the  trajectories even though the system is autonomous.
         \par We observed self-intersections  and cusps in the solution trajectories of some planar  fractional order systems,
         $${}_0^C\mathrm{D}_t^\alpha X(t)=AX(t).$$
         Based on our observations and the results discussed in Section \ref{Sec3}, we propose the following conjecture:
         \begin{Conj}\label{conj}
         There exist singular points in the  trajectory of planar system ${}_0^C\mathrm{D}_t^\alpha X(t)=AX(t)$ if and only if the eigenvalues $\lambda= re^{\pm i\theta}$ of $A$ satisfy 
         $$
         \frac{\alpha\pi}{2}-\delta_1 < |arg(\lambda)| < \frac{\alpha\pi}{2}+\delta_2, 
         $$
         where $\delta_1>0$ and $\delta_2>0$ are sufficiently small positive real numbers.
         \end{Conj}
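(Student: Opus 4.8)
We first reduce the problem to the geometry of a single complex-valued curve. Combining the two components of the solution (17) into $w(t)=x(t)+iy(t)$, a short computation gives
\begin{equation*}
w(t)=(C_1+iC_2)\,\overline{E_\alpha(\lambda t^\alpha)},
\end{equation*}
so the trajectory is the image of $t\mapsto\overline{E_\alpha(\lambda t^\alpha)}$ under the fixed similarity $z\mapsto(C_1+iC_2)z$. Since cusps and self-intersections are invariant under a non-degenerate similarity and under conjugation, the singular points of the trajectory coincide with those of the intrinsic curve $\gamma(t)=E_\alpha(\lambda t^\alpha)$ whenever $(C_1,C_2)\neq(0,0)$; in particular their existence does not depend on the initial data. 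Moreover the substitution $t\mapsto r^{-1/\alpha}t$ turns $\gamma$ into $E_\alpha(e^{i\theta}t^\alpha)$, a reparametrisation that leaves the image unchanged, so only $\theta=|\arg(\lambda)|$ can matter and we may set $r=1$. Finally, from $\frac{d}{dt}E_\alpha(\lambda t^\alpha)=\lambda t^{\alpha-1}E_{\alpha,\alpha}(\lambda t^\alpha)$ we see that $\gamma$ has a cusp at $t_0$ iff $E_{\alpha,\alpha}(\lambda t_0^\alpha)=0$, while a multiple point corresponds to $\gamma(t_1)=\gamma(t_2)$ with $t_1\neq t_2$.

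The plan is then to analyse $\gamma$ through the asymptotic expansion (14). With $\beta=1$, $r=1$ and $p$ large it gives
\begin{equation*}
\gamma(t)=\frac{1}{\alpha}\exp\!\big(e^{i\theta/\alpha}t\big)+c(t),\qquad c(t)=-\frac{e^{-i\theta}}{\Gamma(1-\alpha)}\,t^{-\alpha}+O\!\big(t^{-2\alpha}\big),
\end{equation*}
that is, a logarithmic spiral $\frac{1}{\alpha}e^{(\sigma+i\omega)t}$ with $\sigma=\cos(\theta/\alpha)$ and $\omega=\sin(\theta/\alpha)$, perturbed by the slowly decaying drift $c(t)$ of fixed argument $\pi-\theta$ and modulus $\sim t^{-\alpha}$. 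The critical quantity is $\sigma$: it vanishes precisely when $\theta=\frac{\alpha\pi}{2}$, where the leading term is exactly the circle of radius $1/\alpha$ of Theorem \ref{Thm4.1}; it is negative (spiral decaying to $0$) for $\theta>\frac{\alpha\pi}{2}$ and positive (spiral growing) for $\theta<\frac{\alpha\pi}{2}$.

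For the ``if'' direction I would show that when $|\sigma|$ is small the curve must cross itself. Near $\theta=\frac{\alpha\pi}{2}$ each turn of $\gamma$ is, up to a controlled error, a circle of radius $\frac{1}{\alpha}e^{\sigma t}$ centred at $c(t)$; two consecutive turns are then near-circles of almost equal radii whose centres are displaced by $\Delta c\sim t^{-\alpha-1}$, and two such circles necessarily meet in two points as soon as the radius change per turn is dominated by $|\Delta c|$. Turning this into an honest self-intersection amounts to solving $\gamma(t_2)=\gamma(t_1)$ with $t_2-t_1$ close to one period $2\pi/\omega$, which I would do by an intermediate-value / winding-number argument applied to $t\mapsto\gamma(t)-\gamma(t_1)$, using the expansion above to control the remainder. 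The same estimate pins down how far $\sigma$ may move from $0$ before the mechanism breaks, yielding the thresholds $\delta_1,\delta_2$.

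The ``only if'' direction and the sharpness of the interval are where the real difficulty lies. For $\sigma$ sufficiently positive the exponential term dominates for all large $t$ and $\gamma$ is asymptotically a genuine logarithmic spiral, which is embedded; for $\sigma$ sufficiently negative the drift $c(t)$ eventually dominates and $\gamma$ enters the origin along the fixed ray $\arg=\pi-\theta$, again without crossings. One must (i) make the crossover between spiral and drift quantitative and uniform in $\theta$, (ii) exclude self-intersections between non-consecutive turns, (iii) treat small and moderate $t$, where (14) does not apply, directly from the defining series, and (iv) reconcile the cusp condition with the interval: cusps occur exactly at the arguments of the zeros of $E_{\alpha,\alpha}$, and the balance in (14) shows these zeros accumulate on the rays $\arg z=\pm\frac{\alpha\pi}{2}$, consistent with Region II. Proving that every singular angle lies in one interval (connectedness) and that none survives outside it is, I expect, the main obstacle and the crux of the conjecture.
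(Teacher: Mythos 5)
The statement you are addressing is a conjecture: the paper offers no proof of it, only numerical evidence (Table I, Figure 9) and, in Section VI, a roadmap that is essentially the same as yours --- reduce to the single curve $E_\alpha(re^{i\theta}t^\alpha)$, observe that $\frac{d}{dt}E_\alpha(re^{i\theta}t^\alpha)=re^{i\theta}t^{\alpha-1}E_{\alpha,\alpha}(re^{i\theta}t^\alpha)$, tie cusps to zeros of $E_{\alpha,\alpha}$, and handle multiple points via injectivity, properness and regularity. Your reductions (the similarity $w(t)=(C_1+iC_2)\,\overline{E_\alpha(\lambda t^\alpha)}$ and the rescaling $t\mapsto r^{-1/\alpha}t$ eliminating $r$) are correct and a useful sharpening of the paper's ``set $C_1=1$, $C_2=0$''. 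But your proposal is a plan, not a proof, and it contains concrete gaps beyond the ones you flag yourself.

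First, the claim that $\gamma$ has a cusp at $t_0$ if and only if $E_{\alpha,\alpha}(\lambda t_0^\alpha)=0$ is exactly the step the paper warns against: as its Appendix 2 records, a vanishing velocity is neither necessary nor sufficient for a singular point (the parametrization may be reducible, and conversely a double point can occur with nonvanishing velocity), so you would still have to show that the unit tangent is actually discontinuous at such a $t_0$. Second, the ``two consecutive near-circles must intersect'' heuristic does not by itself yield a self-intersection of a single connected spiral --- the two ``circles'' are joined to one another, and the intermediate-value/winding-number argument you defer is precisely the missing content; moreover the expansion (14) is valid only as $t\to\infty$, whereas the paper's own numerics place the relevant singular points at moderate $t_0$ (e.g.\ $t_0\approx 18.5$ for $\alpha=0.2$), where one must work from the defining series. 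Third, the interval $\left(\frac{\alpha\pi}{2}-\delta_1,\frac{\alpha\pi}{2}+\delta_2\right)$ is governed by the zeros of $E_{\alpha,\alpha}$ of small modulus, for which, as the paper notes, no distribution results are available; the asymptotic accumulation of zeros near the rays $\arg z=\pm\frac{\alpha\pi}{2}$ that you invoke cannot determine $\delta_1$ or $\delta_2$, nor the asymmetry $\delta_2>\delta_1$ observed in Table I. The ``only if'' direction is, as you concede, untouched. So the proposal restates and refines the paper's intended strategy but does not close any of the gaps that make the statement a conjecture rather than a theorem.
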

         We have stability diagram (cf. Figure (8)) containing three regions viz.:\\
         \begin{figure}[h]
         \begin{center}
        \includegraphics[width=0.5\textwidth]{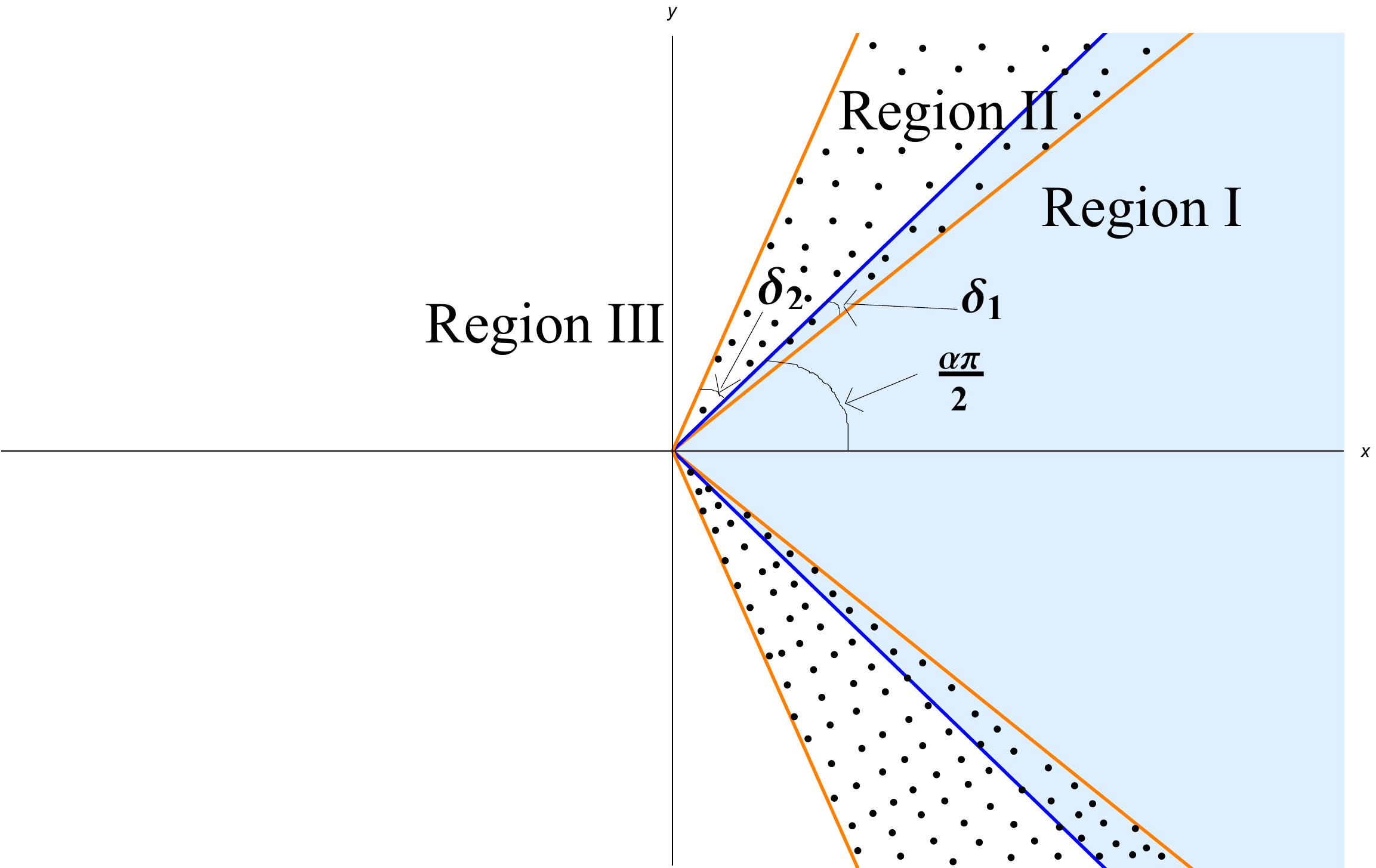}
        \caption{Stability diagram}
        \end{center}               
        \end{figure}
         Region I = \{$\lambda\in\mathbb{C}| 0\le |arg(\lambda)|< \frac{\alpha\pi}{2}$\},\\
         Region II = \{$\lambda\in\mathbb{C}| \frac{\alpha\pi}{2}-\delta_1 < |arg(\lambda)|< \frac{\alpha\pi}{2}+\delta_2$\} and \\
          Region III = \{$\lambda\in\mathbb{C}| \frac{\alpha\pi}{2} \le |arg(\lambda)|< \pi$\}.\\
          Regions I And III are called unstable and stable regions respectively \cite{Tavazoei Haeri}.
          We further observed that $\delta_2>\delta_1$, i.e. Most of the part of Region II is in the stable Region III.\\
          In Table \ref{Tab1}, we list the Region II for some values of $\alpha$ and  eigenvalues $\lambda=re^{\pm i\theta}$ of $A$.\\
            Using Mathematica software, we have verified the existence of self-intersecting trajectories for different values of $\alpha$. Figures 9(a)-9(d) show the singular points in the solution trajectories for $\alpha=0.1$, $\alpha=0.3$, $\alpha=0.6$ and $\alpha=0.9$ respectively. 
            \par We have provided Mathematica code in Appendix 1 so that one can put any value of $\alpha\in(0,1)$ and verify the existence of singular points.
    \begin{figure*}
    \begin{tabular}{c c}
\subfloat[$\alpha=0.1, r=1, \theta=\frac{0.1\pi}{2}+\frac{0.1}{4}$ \qquad \qquad \qquad \qquad \qquad \qquad Cusp and double points.]{\includegraphics[width=0.45\textwidth]{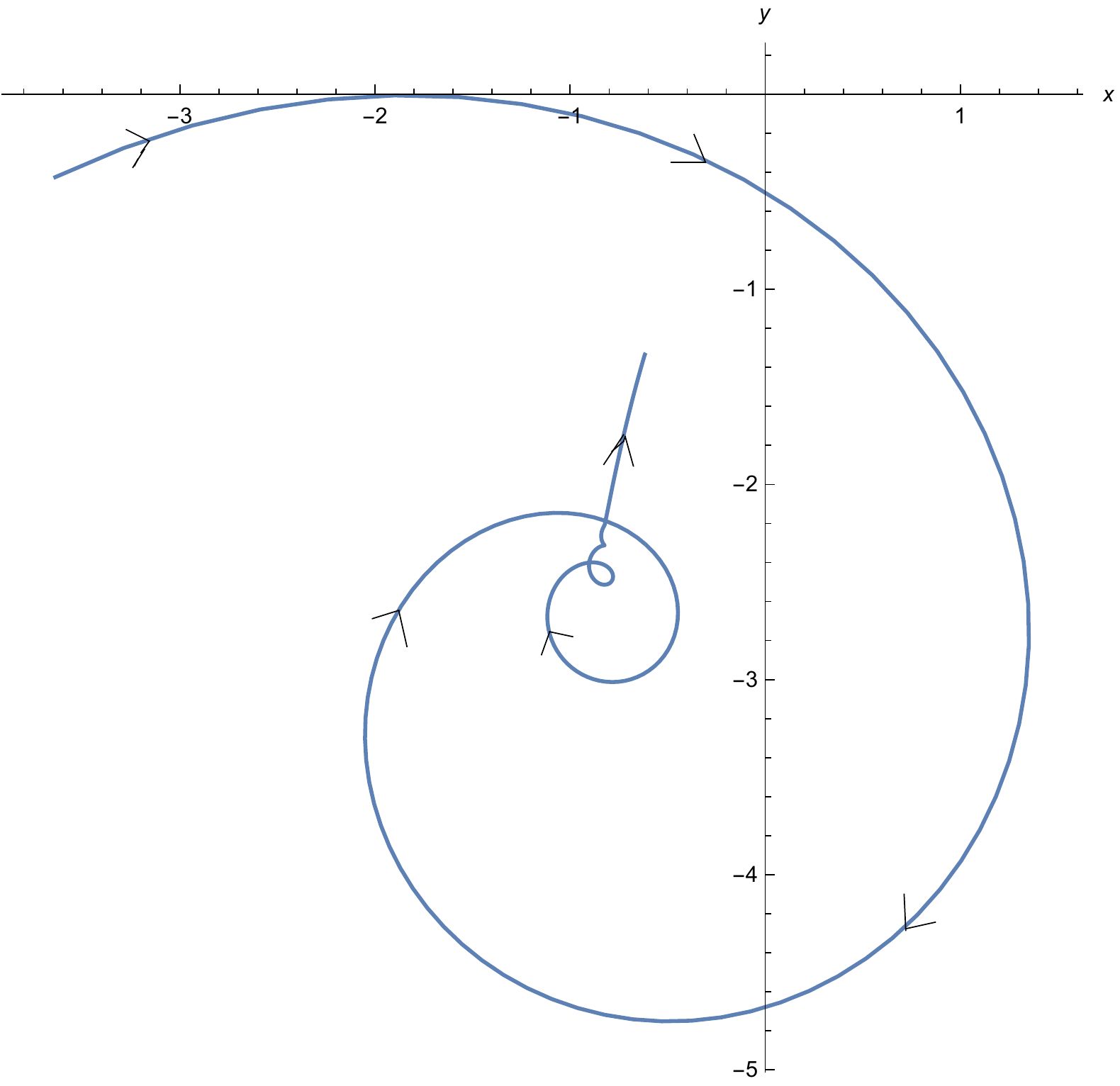}}
        & 
      \subfloat[$\alpha=0.3, r=0.5, 
       \theta=\frac{0.3\pi}{2}+0.029$ \qquad \qquad \qquad \qquad \qquad \qquad  Double and multiple points. ]{\includegraphics[width=0.3\textwidth]{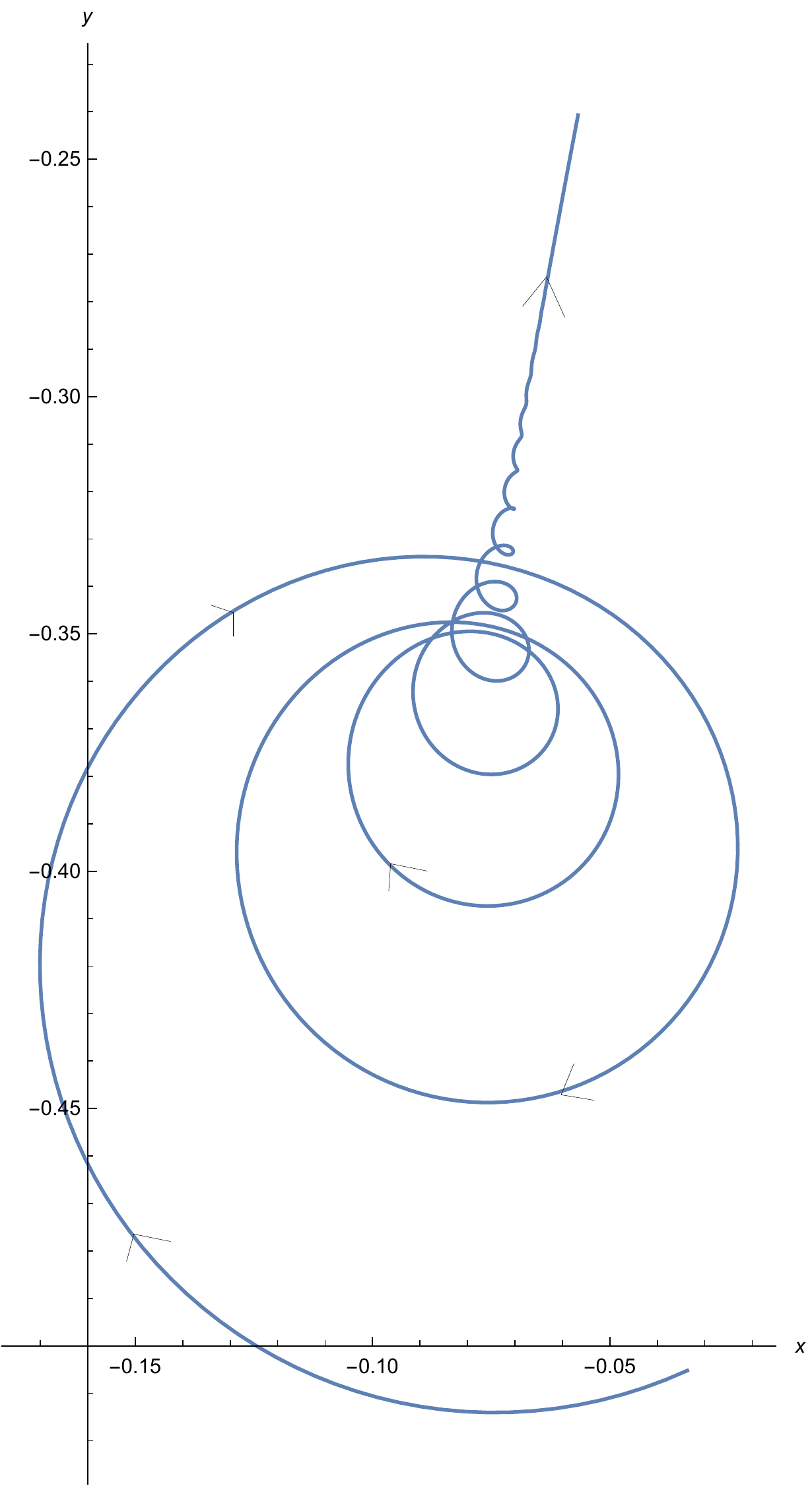}}
      \\
     \subfloat[$\alpha=0.6, r=1, \theta=\frac{0.6\pi}{2}+0.042$ \qquad \qquad \qquad \qquad \qquad \qquad Cusp.]{\includegraphics[width=0.25\textwidth]{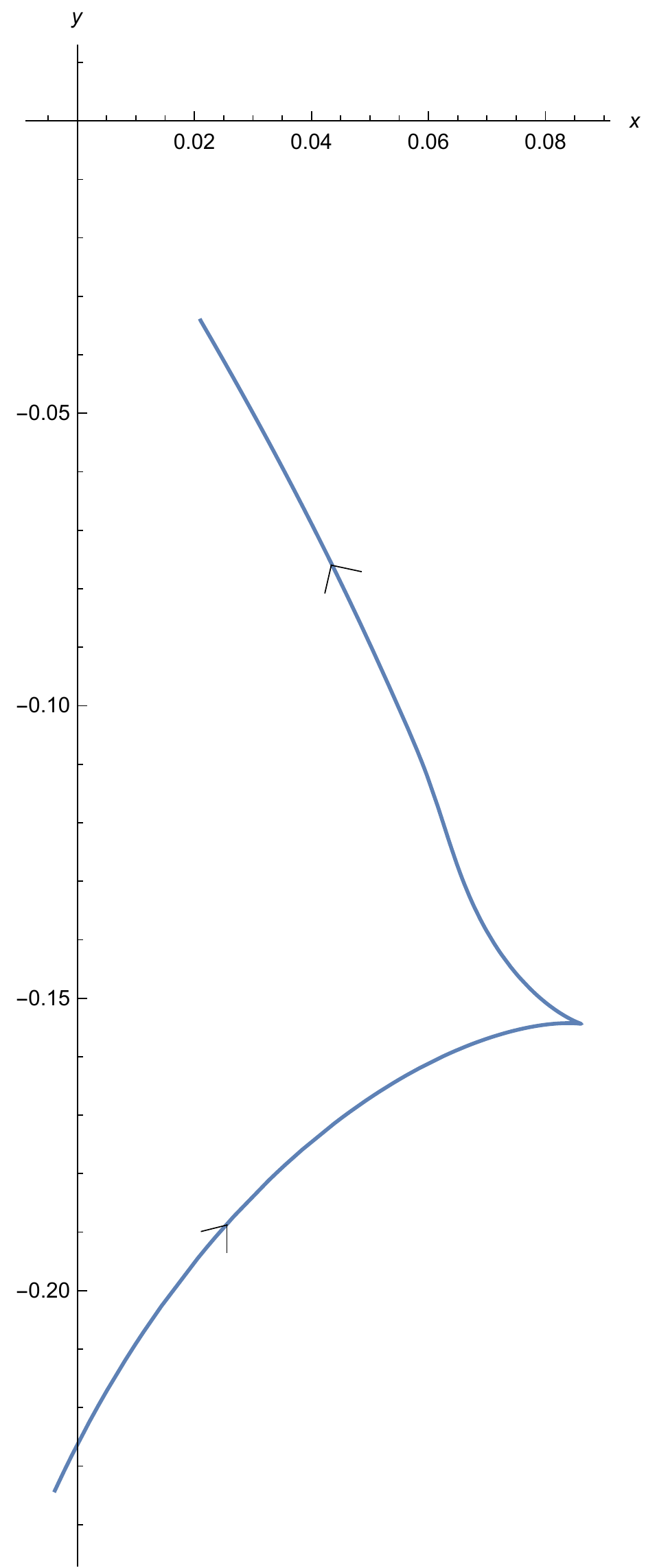}}
    & 
     \subfloat[$\alpha=0.9, r=1.5, \theta=\frac{0.9\pi}{2}+\frac{0.9}{4}$\qquad \qquad \qquad \qquad \qquad \qquad \qquad \qquad \qquad Cusp and double points.]{\includegraphics[width=0.55\textwidth]{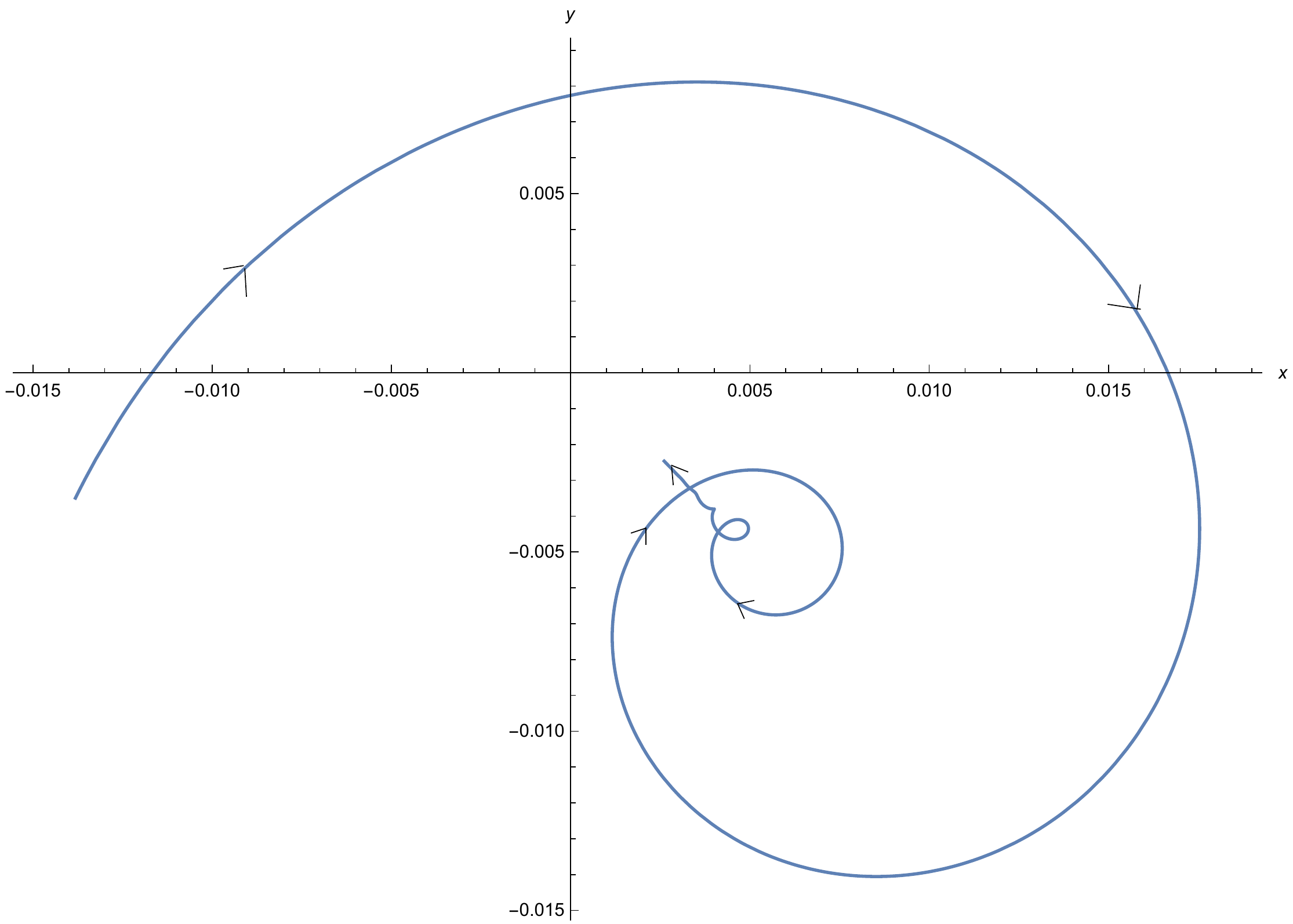}}
    \end{tabular}
    \caption{Singular points in the trajectories of fractional order planar linear systems}
    \end{figure*}
     \begin{table}[h]
    \begin{center}
    \begin{tabular}{|c|c|c|c|c|}
    \hline
    {\bm $\alpha$} & {\bm $\frac{\alpha\pi}{2}$} & {\bm $\delta_1$} & {\bm $\delta_2$} & { Region II} \\
    \hline
   0.1 & 0.15708 & 0.0014 & 0.0639204 & [0.15568, 0.22] \\
    \hline
   0.2 & 0.314159 & 0.0027 & 0.127841 & [0.311459, 0.441] \\
     \hline
   0.3 & 0.471239 & 0.0039 & 0.195761 & [0.467339, 0.665] \\
    \hline
   0.4 & 0.628319 &  0.005 & 0.264681 & [0.623319, 0.891] \\
    \hline
   0.5 & 0.785398 & 0.0057 & 0.341602 & [0.779698, 1.123] \\
    \hline
   0.6 & 0.922478 & 0.0059 & 0.422522 & [0.936578, 1.362] \\
     \hline
   0.7 & 1.09956 & 0.0058 & 0.520443 & [1.09376, 1.613] \\
    \hline
   0.8 & 1.25664 & 0.0049 & 0.633363 & [1.25174, 1.89] \\
   \hline
   0.9 & 1.41372 & 0.0031 & 0.796283 & [1.41062, 2.21] \\
   \hline
    \end{tabular}
    \caption{Values of $\delta_1$ and $\delta_2$ for various $\alpha$}
    \label{Tab1}
    \end{center}
    \end{table}  
    
     {\bf Note:-}\\
  (1) If $\lambda<0$ then the function $E_\alpha(\lambda t^\alpha)$, $0<\alpha<1$ is monotonic \cite{Diethelm}. If $\lambda>0$ then it can be checked that $E_\alpha(\lambda t^\alpha)$, $0<\alpha<1$ is increasing.
 Therefore, if eigenvalues of $A$ are real then there does not exists a self-intersecting trajectory of the system ${}_0^C\mathrm{D}_t^\alpha X(t)=AX(t)$.\\
  (2) Self-intersecting trajectories can also be observed in nonlinear systems \, ${}_0^C\mathrm{D}_t^\alpha X=f(X)$.\\
  According to Hartmann-Grobmann theorem \cite{Li and Ma}, the local behavior of such system will be the same as its linearization ${}_0^C\mathrm{D}_t^\alpha X=J|_{X_*}X$, where $J|_{X_*}$ is the Jacobian of $f$ evaluated at equilibrium $X_*$ of nonlinear system.\\[0.2cm]
  \begin{Ex}
Consider the system \\
  \begin{align}
{}_0^C\mathrm{D}_t^{0.9} x(t) & = x^2-y, \nonumber \\
{}_0^C\mathrm{D}_t^{0.9} y(t) & = x.
  \end{align}
  Here the system has origin as the only equilibrium point.\\
   Therefore, we have
    $
    J|_{(0,0)}=
    \begin{bmatrix}
  0 & -1\\
  1 & 0
    \end{bmatrix}
    $
    and $\lambda_\pm=\pm i$. We can see that $|arg(i)|=\frac{\pi}{2}>\frac{0.9\pi}{2}$.\\
   The trajectory of the system (24) intersects itself, as shown in the Figure (10).
   \begin{figure}
    \includegraphics[width=0.3\textwidth]{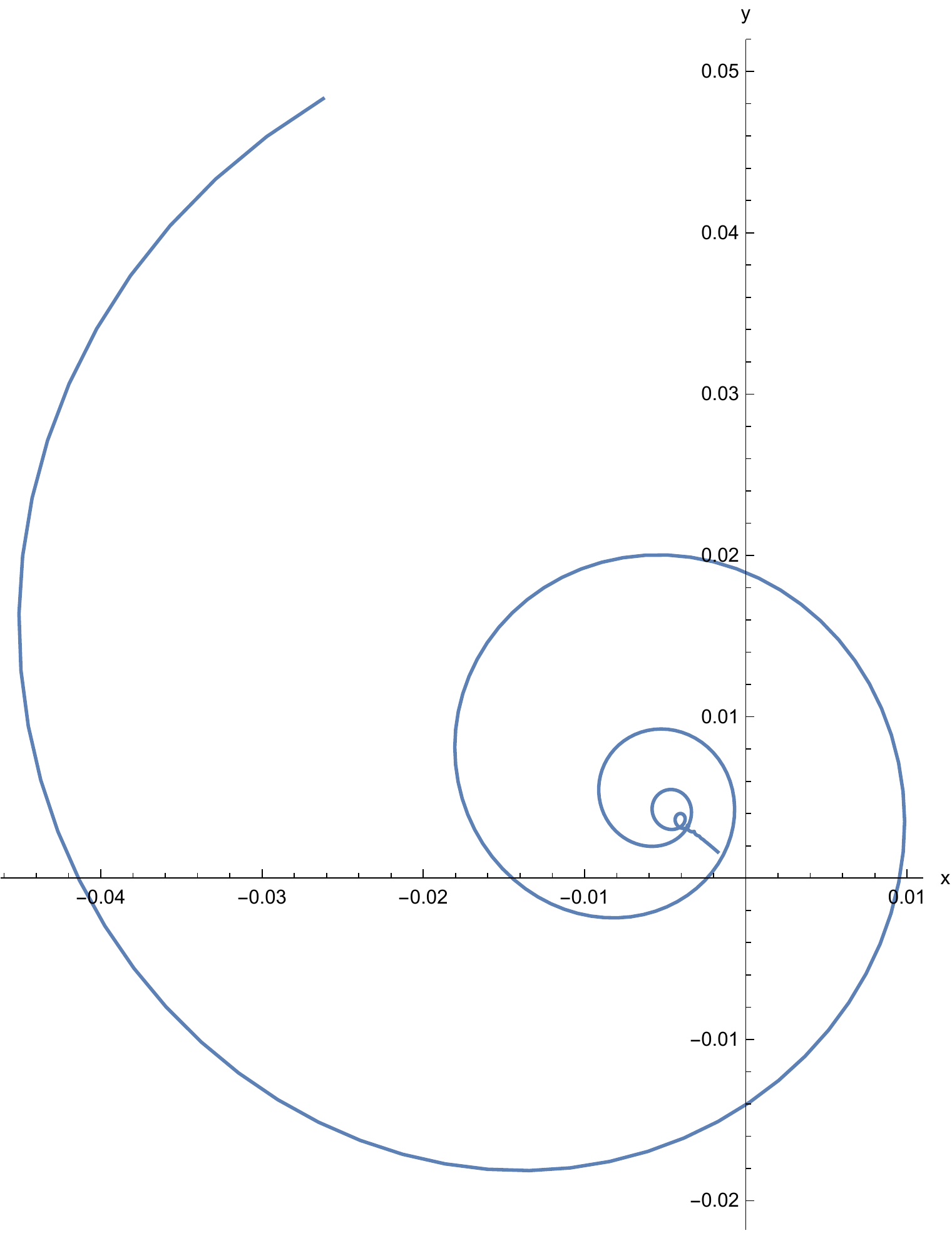}
    \caption{Self-intersecting trajectory of system (24)}            
      \end{figure} 
  \end{Ex} 
  
    \section{Comments on the possible proof of Conjecture \ref{conj}}     
    In this section, we discuss a way to prove  the Conjecture 1.
     \begin{Def}\cite{Karpitschka}
Consider a parametrized curve $X:I\rightarrow\mathbb{R}^2$, where $I\subseteq\mathbb{R}$ is an open interval and $X(t)=(x(t),y(t))$.\\
For $t_0\in I$, a point $X(t_0)$ is said to be a critical point of curve $X$ if 
\begin{equation}
\dot{x}(t_0)=\dot{y}(t_0)=0.
\end{equation}
           \end{Def}
           In most of the examples, the critical points are singular points of the curve. However, the condition (25) is neither necessary nor sufficient for the curve having singular points \cite{Manocha}. (More details are given in Appendix 2).
           \par Consider the linear system ${}_0^C\mathrm{D}_t^\alpha X(t)=AX(t)$. In this case, the solution trajectory is described by equation (17). Without loss of generality, we can set $C_1=1$ and $C_2=0$. Further, assume that the eigenvalues of $A$ are $ r e^{\pm i \theta}$.\\
          $\therefore$ The condition (25) can be written as
          \begin{equation*}
          \frac{d}{dt}
          \begin{bmatrix}
          Re[E_\alpha(re^{i\theta}t^\alpha)]\\
          -Im[E_\alpha(re^{i\theta}t^\alpha)]
          \end{bmatrix}\bigg|_{t=t_0}=
          \begin{bmatrix}
          0\\0
          \end{bmatrix}.
          \end{equation*}
          Equivalently
          \begin{equation}
\frac{d}{dt}E_\alpha(re^{i\theta}t^\alpha)\big|_{t=t_0}=0.
          \end{equation}
          Now,
          \begin{equation*}
          \begin{split}
          \frac{d}{dt}E_\alpha(re^{i\theta}t^\alpha)\big|_{t=t_0}  & = \sum_{k=1}^\infty \frac{\alpha k (re^{i\theta})^k t_0^{\alpha k-1}}{\Gamma{(\alpha k+1)}}\\
           & = \frac{re^{i\theta}}{t_0^{1-\alpha}}\sum_{k=0}^\infty \frac{(re^{i\theta})^k t_0^{\alpha k}}{\Gamma{(\alpha k+\alpha)}}\\
           & = \frac{re^{i\theta}}{t_0^{1-\alpha}} E_{\alpha,\alpha}(re^{i\theta}t_0^\alpha)\, , \qquad t_0>0.
           \end{split}
          \end{equation*} 
          Thus, finding $t_0>0$ satisfying (26) is equivalent to finding zeros of Mittag-Leffler function $E_{\alpha,\alpha}(z)$.\\
          Literature review \cite{Gorenflo, Popsed, Sedletskii} shows that the approximate expressions for ``asymptotic" zeros of Mittag-Leffler function $E_{\alpha,\alpha}(z)$ are known. However, no details are available for the zeros with small absolute values.\\
          In our case, such $t_0$ is usually small.\\
          e.g. If $\alpha=0.2$, $r=1$ then $t_0=18.505$;\\
          $\alpha=0.8$, $r=1$ then $t_0=24.4$ etc.\\
          Thus, to prove the Conjecture 1, one has to prove :\\
          {\bf (a)} $E_{\alpha,\alpha}(re^{i\theta}t_0^\alpha)$ has a zero with sufficiently small absolute value and the unit tangent vector is discontinuous if $\theta\in$ Region II and  $0<\alpha<1$\\
          and \\
          {\bf (b)} If $\theta\notin$ Region II then (as discussed in Appendix 2, the converse of condition (25) is not useful), the map $X$ is injective, proper and regular.
         
  \section{Discussion:}
           The forced damped double-well Duffing equation \cite{Alligood, Chang}
           $$
           \ddot{x}+c\dot{x}-x+x^3=\rho \sin t
           $$
            is an example of nonautonomous planar system exhibiting chaos. 
            There are self-intersecting trajectories of this system in $x\dot{x}$-plane.
            \par We also observed self-intersecting trajectories in (autonomous) planar system of fractional order. The natural question is : ``Can a fractional order autonomous planar system exhibit Chaos?"\\
            As we observed, most of the part of the region II- where the trajectories intersect- lies in the stable region. Since the ``instability of eigenvalues" is a necessary condition for chaos, such stable eigenvalue cannot generate chaotic solutions (See Ex. VII.1).
          \par Further some part of the Region II is in unstable region i.e. there are some unstable eigenvalues leading to self-intersecting trajectories. However, this part of Region II is very small.
          \par It is observed in the literature \cite{v Gejji, Li, C. Li, Wu} that the chaos in integer order system gets disappeared in their fractional order counterparts with sufficiently small values of fractional order $\alpha$.
          \par There is no any reported chaotic fractional order system with system order $<2$.
         \begin{Ex}   
    The equation 
             \begin{equation}
             {}_0^C\mathrm{D}_t^{1.5} x=x(1-x^2)
             \end{equation} 
             discussed in \cite{Edelman} produce only stable eigenvalues as shown below:\\
             The system (27) is equivalent to:
            \begin{align}
            {}_0^C\mathrm{D}_t^{1}x(t) & = y, \nonumber\\
            {}_0^C\mathrm{D}_t^{0.5}y(t) & = x-x^3.
            \end{align}
            The equilibrium points of the system (28) are $E_1=(0,0)$, $E_2=(1,0)$ and $E_3=(-1,0)$. \\
            We discuss the stability of these equilibrium points by using \cite{Tavazoei}. \\
            In this case, $M=LCM(1,2)=2$.\\
            Therefore, $\Delta(\lambda)=\textrm {diag}(\lambda^2,\lambda)-J|_E$, where $J$ is the Jacobian of (28) evaluated at corresponding equilibrium point $E$.\\
            If all roots of $det(\Delta(\lambda))=0$ satisfy $|arg(\lambda)|>\frac{\pi}{2M}$ then the equilibrium point $E$ is asymptotically stable \cite{Tavazoei}. \\
            In this case, roots of $det(\Delta(\lambda))=0$ at $E_1$ are 
            $-1$, $\frac{-1\pm i\sqrt{3}}{2}$ and
            at $E_2$ and $E_3$ are $-2^{1/3}$, $2^{1/3}(\frac{1\pm i\sqrt{3}}{2})$.\\
            Since all these roots are in stable region, equilibrium points of (28) are stable. Thus the system (28) and hence the system (27) cannot generate chaotic solutions.
          \end{Ex}   
            
\section{Conclusion}
         In this article, we have discussed the behavior of the system  ${}_0^C\mathrm{D}_t^\alpha X=AX$. We have considered all the cases of canonical forms of $A$ and provided phase portraits. We have shown that, if eigenvalue $\lambda$ of $A$ satisfy $|arg(\lambda)|=\frac{\alpha\pi}{2}$ (i.e. on the boundary of the stable region), then the trajectory of ${}_0^C\mathrm{D}_t^\alpha X(t)=AX(t)$ tend to a circle $x^2+y^2=\frac{C_1^2+C_2^2}{\alpha^2}$, where $x(0)=C_1$, $y(0)=C_2$. 
         The important observation is the self intersecting trajectories in ${}_0^C\mathrm{D}_t^\alpha X=AX$. We conjectured that there exist singular points in the trajectory if and only if the eigenvalues $\lambda$ of $A$ satisfy 
         $$
         \frac{\alpha\pi}{2}-\delta_1 < |arg(\lambda)| < \frac{\alpha\pi}{2}+\delta_2, 
         $$
         where $\delta_1>0$ and $\delta_2>0$ are sufficiently small positive real numbers. Further, we presented some comments on the possible proof of this conjecture.
        \par We hope that our results will be very useful to the researchers working in this field. The results can be extended to an incommensurate order case as well as to the fractional differential equations involving other types of derivatives.
        
        \section*{Acknowledgments}
        S. Bhalekar acknowledges  the Science and Engineering Research Board (SERB), New Delhi, India for the Research Grant (Ref. MTR/2017/000068) under Mathematical Research Impact Centric Support (MATRICS) Scheme. M. Patil acknowledges Department of Science and Technology (DST), New Delhi, India for INSPIRE Fellowship (Code-IF170439). Authors thank Prof. Andrew Hwang, College of the Holy Cross, Worcester, MA for fruitful discussion on singular points.
   \section*{Appendix 1}
  The Mathematica code to visualize the behavior of the trajectories of ${}_0^C\mathrm{D}_t^\alpha X(t)=AX(t)$ corresponding to the eigenvalues in the region II is given as:
  
  \lstset{
   	tabsize=4,
   	frame=single,
   	language=mathematica,
   	basicstyle=\scriptsize\ttfamily,
   	keywordstyle=\color{black},
   	backgroundcolor=\color{gris245},
   	commentstyle=\color{gray}}
\begin{lstlisting}
Manipulate[
 ParametricPlot[
  Evaluate[{Re[
 MittagLefflerE[alpha, 
 r*Exp[I ((alpha*Pi)/2.0 + epsilon)] t^(alpha)]], -Im[
 MittagLefflerE[alpha, 
 r*Exp[I ((alpha*Pi)/2.0 + epsilon)] t^(alpha)]]}], {t, 0.5
 , b}, AxesLabel -> {x, y}, PlotRange -> All, 
 AxesOrigin -> {0, 0}], {alpha, 0.01, 1,         InputField}, {{r, 1}, 0, 10,
  InputField}, {epsilon, -0.006, 0.8,            InputField}, {b, 10, 500, 
 InputField}]
      \end{lstlisting}
  The output of this code is shown in the Figure (11). In the graphical user interphase generated using the above code, one has to provide the value of fractional order $\alpha$ in first window, values of $r$, $\varepsilon$ and final time `b' in second,   third and fourth window respectively.
  \begin{figure}[h]       \subfloat[$\alpha=0.01$.]{\includegraphics[width=0.34\textwidth]{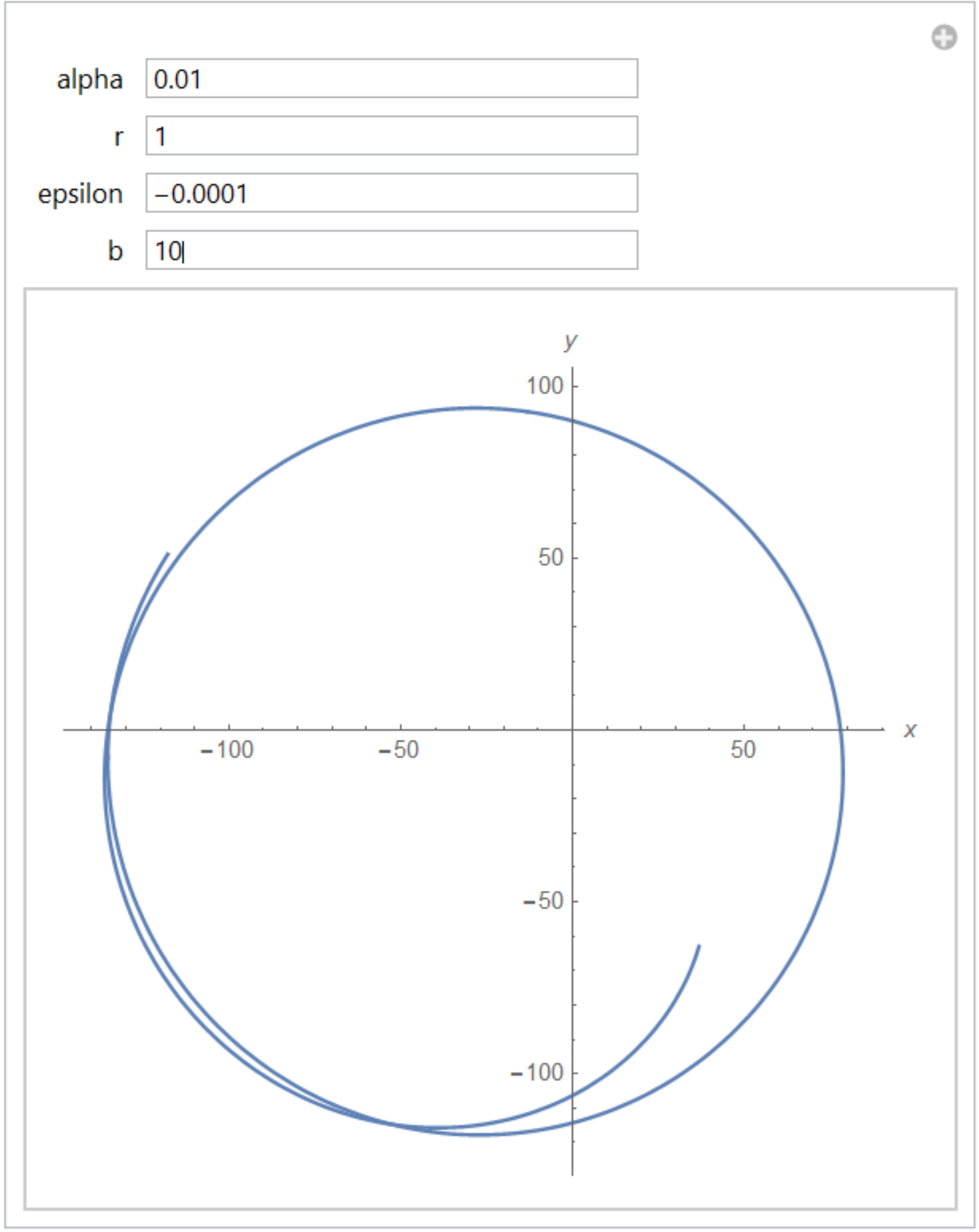}}
     \hfill 
  \subfloat[$\alpha=0.3$]{\includegraphics[width=0.34\textwidth]{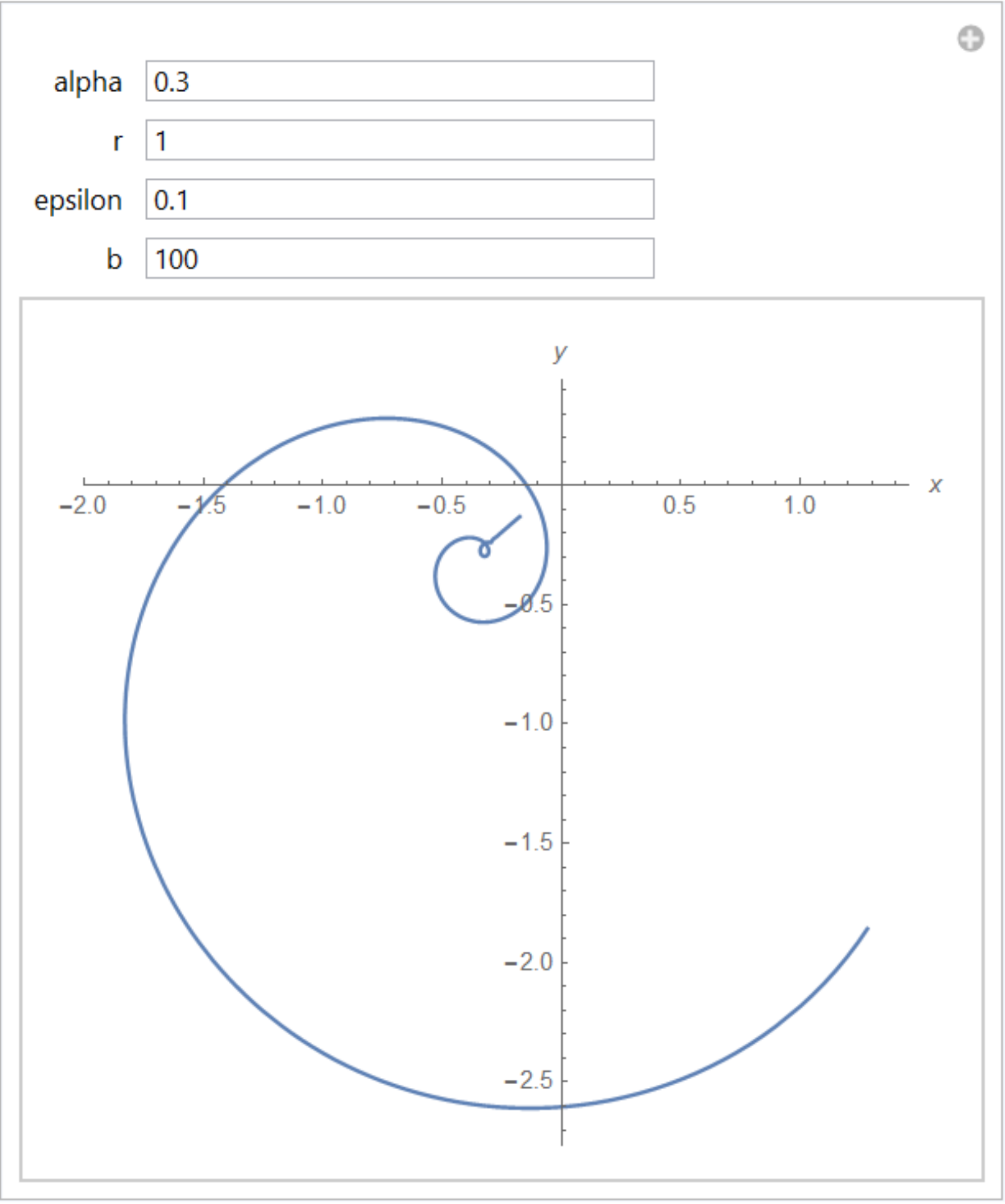}}
  \caption{Graphical user interphase generated using Mathematica}
 \end{figure}
\section*{Appendix 2:\, Singular points of a curve }
Let $I\subseteq\mathbb{R}$ be an open interval.
\begin{Def}\cite{O'Neill}
A mapping $X:I\rightarrow\mathbb{R}^2$ is called regular if $\dot{X}(t)\ne(0,0)$, $\forall t\in I$.
\end{Def} 
\begin{Def}\cite{Guillemin}
A mapping $X:I\rightarrow\mathbb{R}^2$ is called proper if the inverse image of every compact set under $X$ is compact.
\end{Def}
{\bf Remark 1 : Vanishing tangent vector need not imply singular points.}
\begin{The}
Consider a parametric curve $X:I\rightarrow\mathbb{R}^2$ given by $X(t)=(x(t),y(t))$, $t\in I$.
If there exists $t_0\in I$ such that $\dot{x}(t_0)=\dot{y}(t_0)=0$ then either $X(t_0)$ is a singular point or the parametrization is reducible.
\end{The}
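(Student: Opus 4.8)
The plan is to prove the (inclusive) disjunction by establishing the implication ``if $X(t_0)$ is not a singular point, then the parametrization is reducible.'' Since the paper does not give a formal definition of the two terms occurring in the conclusion, I would first pin them down in a way consistent with the paper's examples. I take \emph{$X(t_0)$ is not a singular point} to mean that the image $\Gamma=X(I)$ is, in a neighborhood of $p:=X(t_0)$, a smooth embedded one-dimensional submanifold of $\mathbb{R}^2$, i.e. a smooth arc through $p$ with a well-defined tangent line, locally the graph of a smooth function. This reading is exactly what excludes the cuspidal model $X(t)=(t^2,t^3)$ (whose image $y^2=x^3$ is not a submanifold at the origin) while admitting $X(t)=(t^3,t^3)$ (whose image is the line $y=x$); and I take \emph{reducible} to mean that $X$ factors as $X=Y\circ h$ for a \emph{regular} parametrization $Y$ and a scalar reparametrization $h$ whose derivative vanishes at $t_0$, so that the failure of regularity lives entirely in $h$ and not in the geometry of $\Gamma$.

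With this terminology the two alternatives are exhaustive, and the main argument is short. Suppose $\dot x(t_0)=\dot y(t_0)=0$ and $p$ is not a singular point. By smoothness of $\Gamma$ near $p$ there is a neighborhood $U$ of $p$ in $\Gamma$ and a regular parametrization $Y\colon J\to\mathbb{R}^2$ with $Y(s_0)=p$, $\dot Y(s)\neq(0,0)$ for all $s\in J$, and $Y(J)=U$ (for instance the arc-length or a coordinate-graph parametrization guaranteed by smoothness). Because $X$ is continuous with $X(t_0)=p$, there is $\delta>0$ such that $X(t)\in U$ for $|t-t_0|<\delta$. Since $Y$ is a smooth \emph{embedding}, $Y^{-1}\colon U\to J$ is smooth, so
\[
h(t):=Y^{-1}(X(t)),\qquad |t-t_0|<\delta,
\]
is well-defined and smooth, with $X(t)=Y(h(t))$ and $h(t_0)=s_0$. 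Differentiating $X=Y\circ h$ gives $\dot X(t)=\dot Y(h(t))\,h'(t)$, and evaluating at $t_0$ yields $0=\dot Y(s_0)\,h'(t_0)$. As $\dot Y(s_0)\neq(0,0)$, we conclude $h'(t_0)=0$. Thus $X$ is the regular parametrization $Y$ precomposed with a reparametrization $h$ whose derivative vanishes at $t_0$, so the parametrization is reducible and the second alternative holds.

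I expect the principal obstacle to be making the notion of ``singular point'' precise enough to close the dichotomy, together with the technical verification that $h=Y^{-1}\circ X$ is genuinely smooth; the latter rests on $Y$ being an embedding rather than merely an immersion, so that $Y^{-1}$ is continuous on $U$ and smooth via the usual inverse-function argument for regular curves. Two degenerate cases must be dispatched separately at the outset. If $X$ is constant on a neighborhood of $t_0$, then $\Gamma$ is locally a single point and the parametrization is trivially reducible. More delicate are \emph{turning points} such as $X(t)=(t^2,0)$, where $\dot X(t_0)=0$ but the image is only a half-ray with $p$ at its endpoint; under the submanifold reading above such a point is declared singular (first alternative), and one should verify that this classification is the intended one. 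Finally, the factorization $X=Y\circ h$ and the ensuing chain rule require $X$ to be at least $C^1$, which is harmless here since the trajectories of interest are real-analytic for $t>0$.
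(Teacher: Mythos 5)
The paper offers no proof of this theorem: it is stated bare in Appendix~2, sandwiched between a remark and the illustrative example $X(t)=(t^{2},t^{4})$, so there is nothing internal to compare your argument against. Taken on its own terms, your argument is sound: once ``$X(t_{0})$ is not singular'' is read as ``the image is locally a smooth embedded arc through $p$'' and ``reducible'' as ``$X=Y\circ h$ with $Y$ regular and $h'(t_{0})=0$,'' the local regular parametrization $Y$ exists, $h=Y^{-1}\circ X$ is well defined and differentiable at $t_{0}$ (provided you note that $Y^{-1}$ must be extended to a smooth map on an ambient neighborhood of $p$, which the graph coordinate does automatically), and the chain rule forces $h'(t_{0})=0$. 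Your handling of the constant and turning-point degeneracies is appropriate, and only differentiability of $X$ at $t_{0}$ --- which the hypothesis already grants --- is needed for the final chain-rule step.

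The one substantive caveat is that almost all of the content has been absorbed into your choice of definitions, and the theorem's truth is sensitive to that choice. The paper's only pointer for ``reducible'' is the reference to Manocha on proper parametrizations, where properness/irreducibility is a global, essentially birational notion (the parametrization traces the curve generically once). Under that reading the statement is false: $X(t)=(t^{3},t^{3})$ is injective, hence proper/irreducible in Manocha's sense, satisfies $\dot{X}(0)=(0,0)$, and its image is a straight line with no singular point. Your local definition --- the degeneracy lives entirely in a scalar factor $h$ with $h'(t_{0})=0$ --- sidesteps this and renders the dichotomy nearly tautological (``not singular'' $\Rightarrow$ locally embedded $\Rightarrow$ factors through a regular chart). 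That is a legitimate way to make an imprecise statement true, but you should state explicitly that you are supplying a definition the paper does not, and that the example the paper itself gives ($(t^{2},t^{4})\to(t,t^{2})$, a global degree-two substitution) suggests the authors may have intended the stronger, global notion under which the claim needs qualification.
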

\begin{Ex}
Consider $x(t)=t^2$, $y(t)=t^4$ and $X:\mathbb{R}\rightarrow\mathbb{R}^2$ defined by $X(t)=(x(t),y(t))$.\\
We have $\dot{x}(0)=\dot{y}(0)=0$.\\
However, $X$ is a parabola $x^2=y$ and there is no any singular point.\\
It can be checked that the parametrization is reducible. We have $X_1(t)=(t,t^2)$ as a reduced parametrization for the same curve $x^2=y$. \\
Further the unit tangent vector to $X$
$$T(t)=\frac{(2t,4t^3)}{\sqrt{4t^2+16t^6}}=\frac{(1,2t^2)}{\sqrt{1+4t^4}}$$
is continuous on $\mathbb{R}$.
\end{Ex} 
{\bf Remark 2 :} \\
If there is a singular point at $X(t_0)$ then the unit tangent vector $T$ will be discontinuous at $t_0$.\, \cite{Hwang}\\
However the converse is not true. The continuous tangent vector does not imply the non-existence of singular points in the image of $X$.
\begin{Ex}
Consider $X:\mathbb{R}\rightarrow\mathbb{R}^2$ defined by $$X(t)=(t^2,t^3-3t).$$
We have \\
$\dot{X}(t)=(2t,3t^2-3)$  and  
$T(t)=\frac{(2t,3t^2-3)}{\sqrt{9t^4-14t^2+9}}$.\\
The unit tangent $T$ is continuous on $\mathbb{R}$. However, the image of $X$ contains a double point because $X(\sqrt{3})=X(-\sqrt{3})$. i.e. $X$ is not injective.
\end{Ex}  
{\bf Remark 3 :}\\
If $X(I)$ is a smooth curve (i.e. does not have any singular points) then $X$ is injective.\\
The converse is not true.
\begin{Ex}
Consider $X:(-\pi,\pi)\rightarrow\mathbb{R}^2$ defined by $$X(t)=(\sin t,\sin 2t).$$
The image set $X(-\pi,\pi)$ is a figure-8 which contains a singular point.\\
However, $X$ is injective. Note that, $X$ is not a proper.
\end{Ex} 
\begin{The}\cite{Shastri}
If $X:I\rightarrow\mathbb{R}^2$ is regular, proper and injective then $X(I)$ is 1-manifold i.e. a smooth curve.
\end{The}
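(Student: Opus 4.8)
The plan is to show that the three hypotheses together force $X$ to be a smooth embedding of $I$ into $\mathbb{R}^2$, so that its image inherits the structure of an embedded $1$-submanifold. I would split the argument into a local step, which manufactures coordinate charts out of regularity, and a global step, which uses properness to guarantee that these local pieces assemble into a genuine manifold without any self-accumulation of the image.

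For the local step I would exploit regularity through the inverse function theorem. Fix $t_0\in I$; since $\dot{X}(t_0)=(\dot{x}(t_0),\dot{y}(t_0))\neq(0,0)$, at least one coordinate derivative is nonzero, say $\dot{x}(t_0)\neq 0$. Then $x$ restricts to a diffeomorphism of a small open interval $J\ni t_0$ onto an open interval, with smooth inverse $t=\varphi(x)$. On $X(J)$ the curve is the graph $\{(x,y(\varphi(x)))\}$ of a smooth function, hence a smooth $1$-submanifold of $\mathbb{R}^2$, and the assignment $(x,y)\mapsto\varphi(x)$ supplies a chart. Thus every point of $I$ has a neighborhood on which $X$ is an embedding and $X(J)$ is a smooth curve. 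This is the routine part, being a direct application of the local immersion (rank) theorem.

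For the global step I would promote this to a genuine embedding using properness. The map $X\colon I\to X(I)$ is a continuous bijection, and it remains only to show that $X^{-1}$ is continuous, equivalently that $X$ is a closed map. Since $\mathbb{R}^2$ is locally compact and Hausdorff, every proper continuous map into it is closed: if $C\subseteq I$ is closed then $X(C)$ is closed in $\mathbb{R}^2$. By injectivity, the preimage of each closed set under $X^{-1}$ is exactly such an $X(C)$, so $X$ is a homeomorphism onto its image. This is precisely the point where the figure-eight map $X(t)=(\sin t,\sin 2t)$ of Appendix~2 fails: there $X$ is an injective immersion but is not proper, and the image is not a manifold. Properness is what rules out that kind of self-accumulation of the image as $t$ approaches the endpoints of $I$.

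Combining the two steps, $X$ is an injective immersion that is simultaneously a homeomorphism onto its image, that is, a smooth embedding; the graph charts from the local step are therefore mutually compatible and cover $X(I)$, endowing it with the structure of a $1$-manifold. I expect the main obstacle to be the global step rather than the local one: the delicate point is recognizing that properness and injectivity must be used \emph{jointly} to obtain the homeomorphism-onto-image property, and verifying carefully that a proper map into the locally compact Hausdorff space $\mathbb{R}^2$ is closed. The chart construction from regularity, by contrast, is standard.
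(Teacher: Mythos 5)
The paper does not prove this statement at all---it is quoted verbatim from the cited reference (Shastri's \emph{Elements of Differential Topology})---so there is no in-paper argument to compare against. Your proposal is the standard and correct proof: regularity plus the rank/inverse function theorem gives the local graph charts, and the observation that a proper continuous map into the locally compact Hausdorff space $\mathbb{R}^2$ is closed, combined with injectivity, upgrades $X$ to a homeomorphism onto its image, i.e.\ a smooth embedding, whose image is an embedded $1$-submanifold. The only point worth tightening is the final assembly: to see that $X(I)$ near $X(t_0)$ coincides with the local graph piece $X(J)$, you should note explicitly that the homeomorphism-onto-image property makes $X(J)$ relatively open in $X(I)$, so some ambient open set $U\ni X(t_0)$ satisfies $U\cap X(I)=U\cap X(J)$; you gesture at this when you say properness rules out self-accumulation, but it is the one step that deserves to be written out. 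Your identification of the figure-eight example as the failure mode when properness is dropped is exactly the right sanity check and matches the role that example plays in the paper's Appendix~2.
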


\end{document}